\newsavebox\myboxA
\newsavebox\myboxB
\newlength\mylenA
\newcommand*\xoverline[2][0.75]{%
    \sbox{\myboxA}{$\m@th#2$}%
    \setbox\myboxB\null
    \ht\myboxB=\ht\myboxA%
    \dp\myboxB=\dp\myboxA%
    \wd\myboxB=#1\wd\myboxA
    \sbox\myboxB{$\m@th\overline{\copy\myboxB}$}
    \setlength\mylenA{\the\wd\myboxA}
    \addtolength\mylenA{-\the\wd\myboxB}%
    \ifdim\wd\myboxB<\wd\myboxA%
       \rlap{\hskip 0.5\mylenA\usebox\myboxB}{\usebox\myboxA}%
    \else
        \hskip -0.5\mylenA\rlap{\usebox\myboxA}{\hskip 0.5\mylenA\usebox\myboxB}%
    \fi}
\newcommand{\rn}{\mathbb R^n}
\newcommand{\sn}{S^{n-1}}
\newcommand{\kno}{\mathcal K^n_o}
\newcommand{\sno}{\mathcal S^n_o}
\newcommand{\bu}{\pmb{\nu}}
\newcommand{\balpha}{\pmb{\alpha}}
\newcommand\wtilde[1]{\overset{\lower.4ex\hbox{$\scriptstyle \sim$}}{#1}}
\newcommand\wst[1]{\overset{\lower.5ex\hbox{$\scriptscriptstyle \sim$}}{#1}}
\newcommand{\blb}{\raise.3ex\hbox{$\scriptstyle \pmb \lbrack$}}
\newcommand{\sblb}{\raise.1ex\hbox{$\scriptscriptstyle \pmb \lbrack$}}
\newcommand{\brb}{\raise.3ex\hbox{$\scriptstyle \pmb \rbrack$}}
\newcommand{\sbrb}{\raise.1ex\hbox{$\scriptscriptstyle \pmb \rbrack$}}
\newcommand{\bla}{\raise.2ex\hbox{$\scriptstyle\pmb \langle$}}
\newcommand{\sbla}{\raise.1ex\hbox{$\scriptscriptstyle\pmb \langle$}}
\newcommand{\bra}{\raise.2ex\hbox{$\scriptstyle\pmb \rangle$}}
\newcommand{\sbra}{\raise.1ex\hbox{$\scriptscriptstyle\pmb \rangle$}}
\newcommand{\blrb}{\raise.3ex\hbox{$\scriptstyle \pmb | $}}
\newcommand{\sblrb}{\raise.1ex\hbox{$\scriptscriptstyle \pmb | $}}
\newcommand{\st}{\scriptstyle}
\newcommand{\sst}{\scriptscriptstyle}
\newcommand{\bi}{\item[\raisebox{0pt}{$\st\bullet$}]}
\newcommand{\bii}{\item[\raisebox{0pt}{$\sst\bullet$}]}
\newcommand{\ir}[1]{\item[\rm ({#1})]}
\newcommand{\sir}[1]{\item[\scriptsize\rm ({#1})]}
\newcommand{\wt}{\widetilde}
\newcommand{\psum}{\,{+_{\negthinspace\kern-2pt p}}\,}
\newcommand{\qsum}[1]{\,{+_{\negthinspace\kern-2pt \lower -2pt \hbox{$_{_{#1}}$}}}\,}
\newcommand{\osum}{{+_{\negthinspace\kern-2pt {\rm{o}}}}\,}
\newcommand{\dpsum}{\,{\tilde+_{\negthinspace\kern-1pt p}}\,}
\newcommand{\dqsum}[1]{{\,\wt+_{\negthinspace\kern-1pt #1}}\,}
\newcommand{\lsub}[1]{\hskip -1.5pt\lower.5ex\hbox{$_{#1}$}}
\newcommand{\R}{\mathbb{R}}
\newcommand{\HH}{\mathcal{H}}
\newcommand{\rhok}{\rho\hskip -1pt\lower.4ex\hbox{$_{K}$}}
\newcommand{\nuk}{\nu{\hskip -2pt\lower.2ex\hbox{$_{K}$}}}
\newcommand{\rk}{r{\hskip -2pt\lower.2ex\hbox{$_{K}$}}}
\newcommand{\srk}{r{\hskip -2pt\lower.2ex\hbox{$_{\sst K}$}}}
\title[Dual Curvature Density Equation with Group Symmetry]
{Dual Curvature Density Equation with Group Symmetry}
\author[K. B\"or\"oczky]{K\'aroly J. B\"or\"oczky}
\address{Alfr\'ed R\'enyi Institute of Mathematics,
 Hungarian Academy of Sciences,
 Realtanoda u. 13-15, H-1053,
 Budapest, Hungary}
\email{boroczky.karoly.j@renyi.hu}
\author[\'A. Kov\'acs]{\'Agnes Kov\'acs}
\address{Department of Biostatistics, University of Veterinary Medicine, Budapest}
\email{kovacs.agnes.maria@univet.hu}
\author[S. Mui]{Stephanie Mui}
\address{Department of Mathematics, Georgia Institute of Technology,  686 Cherry St NW, Atlanta, GA 30332, USA}
\email{smui3@gatech.edu}
\author[G. Zhang]{Gaoyong Zhang}
\address{Department of Mathematics,
Courant Institute of Mathematical Sciences,
New York University,
251 Mercer Street,
New York, NY 10012, USA}
\email{gaoyong.zhang@courant.nyu.edu}
\subjclass{35J96 (52A38)}
\keywords{
Dual curvature measure, $L_p$ dual curvature measure,
dual Minkowski problem, $L_p$ dual Minkowski problem,
dual curvature measure equation, $L_p$ dual curvature measure equation,
orthogonal group, group symmetry, convex body
}
\thanks{Research supported, in part, by NKFIH ADVANCED 150613,
NSF Grant  DMS--2005875, NSF Grant  DMS--2402038.}
\begin{document}

\maketitle

\newtheorem{lemma}{Lemma}[section]
\newtheorem{theo}[lemma]{Theorem}
\newtheorem{example}[lemma]{Example}
\newtheorem{defi}[lemma]{Definition}
\newtheorem{claim}[lemma]{Claim}
\newtheorem{coro}[lemma]{Corollary}
\newtheorem{conj}[lemma]{Conjecture}
\newtheorem{prop}[lemma]{Proposition}
\newtheorem{remark}[lemma]{Remark}
\newtheorem{problem}[lemma]{Problem}
\newtheorem{const}[lemma]{Construction}

\begin{abstract}

This paper studies the general $L_p$ dual curvature density equation under a group symmetry assumption.
This geometric partial differential equation arises from the general $L_p$ dual Minkowski problem of prescribing
the $L_p$ dual curvature measure of convex bodies.
It is a Monge-Amp\`ere type equation on the unit sphere. If the density function of the dual curvature measure
is invariant
under a closed subgroup of the orthogonal group, the geometric partial differential equation
is solved in this paper for certain range of negative $p$ using a variational method.
This work generalizes recent results on the $L_p$ dual Minkowski problem of origin-symmetric convex bodies.
\end{abstract}


\section{Introduction}

This paper studies the following Monge-Amp\`ere type equation on the unit sphere $\sn$,
\begin{equation}\label{pde}
\det \big(\nabla^2 h(v)|_{v^\perp}\big) = h(v)^{p-1} F(\nabla h(v))^{n-q} f(v),
\ \ \ v\in \sn,
\end{equation}
where $p, q\in \R$,  $f$ is a nonnegative integrable function on $\sn$,
$F$ is a nonnegative continuous homogenous function of degree 1 in
$\rn$, $n\ge 2$, $\nabla^2h(v)|_{v^\perp}$ is the restriction of the
Hessian to the subspace $v^\perp$, and $\nabla h$ is the gradient of the unknown nonnegative
homogeneous function $h$ of degree 1. Equation \eqref{pde} is called the {\it dual curvature
density equation}. It arises from the general $L_p$ dual Minkowski problem in convex geometry
posed in \cite{LYZ18}, that asks about prescribing the
$L_p$ $q$th-dual curvature measure $\wt C_{p,q}(K,Q;\cdot)$ of a convex body $K$ with respect to
a star body $Q$ in $\rn$ (see Section 2 for the definition).

\begin{problem}[General $L_p$ dual Minkowski problem]\label{LpdualMinkowskiProblem}
Given a finite Borel measure $\mu$ on $\sn$ and a star body $Q$ in $\rn$,
find the necessary and sufficient conditions so that
there exists a convex body $K$ in $\rn$ that solves the geometric measure equation,
\begin{equation}\label{meq}
\wt C_{p,q}(K,Q; \cdot) = \mu.
\end{equation}
\end{problem}

In the absolutely continuous case, that is, the measure $\mu$ has a density function $f$, then the measure equation \eqref{meq} becomes the partial
differential equation \eqref{pde}. The general $L_p$ dual Minkowski problem can then be stated as follows:

\begin{problem}[Continuous $L_p$ dual Minkowski problem]\label{CpqdMP}
Given a nonnegative integrable function $f$ on $\sn$ and a nonnegative continuous homogeneous
function $F$ of degree 1 in $\rn$,
find the necessary and sufficient conditions so that there exists a nonnegative
convex homogeneous function $h$ of degree 1 that solves equation \eqref{pde}.
\end{problem}

The general $L_p$ dual Minkowski problem unifies
several well-known Minkowski problems as special cases.
The first special case of $q=n$
is known as the $L_p$ Minkowski problem introduced by Lutwak \cite{Lut93a} in 1993, which is a major problem
in the $L_p$ Brunn-Minkowski theory in convex geometry. The
$L_p$ Minkowski problem includes three well-known problems: the classical Minkowski
problem when $p=1$, the largely unsolved logarithmic Minkowski problem when $p=0$,
and also the unsolved centro-affine Minkowski problem when $p=-n$. The $L_p$ Minkowski problem
has been studied extensively by many authors, see the survey paper \cite{HYZ25} for more details and references.
The second special case of $q=0$ and $F(x)=|x|$ is known as the $L_p$ Aleksandrov problem, which includes the classical Aleksandrov problem ($p=0$), see \cite{HLYZ18}.
The third special case of $p=1$ and $F(x)=|x|$ is the dual Minkowski problem, first studied in the celebrated
paper \cite{HLYZ16} and then subsequently in a series of papers 
\cite{Zha17, LSW20, ChL18, BLYZ19,ElH23,CLZ19,BoH15,BLYZ13,HeP18,CCL21}.

A number of cases of the $L_p$ dual Minkowski problem have been studied. The case of $p>0$ and $q<0$
is quite similar to the classical Minkowski problem, while the case of
$q>0$ and $p<0$ is more challenging.  We mention the following solved cases of the
equation \eqref{pde}:
\begin{itemize}
\bi $p=0$.
\begin{itemize}
\bii $q=0$. Solved by B\"or\"oczky-LYZ-Zhao \cite{BLYZ20} and Li-Wang \cite{LW18}.
\bii $0<q<n$. Solved for even $f$ when $F(x)=|x|$. See \cite{HLYZ16, BLYZ19, Zha18}
\end{itemize}

\bi $p>0$.
\begin{itemize}
\bii $q=0$. Solved by Huang-LYZ \cite{HLYZ18} when $F(x)=|x|$.

\bii $q\neq 0, p$. Solved when $F(x)=|x|$. See Chen-Li \cite{ChL21}, Lu-Pu \cite{LuP21},
B\"or\"oczky-Fodor \cite{BoF19}, and Huang-Zhao \cite{HuZ18}.
\end{itemize}

\bi $p<0$.
\begin{itemize}
\bii $q=0$. Solved by Huang-LYZ \cite{HLYZ18} when $f$ is even and $F(x)=|x|$, and generalized by Zhao \cite{Zha19} and Mui \cite{Mui22}.

\bii $p\neq q<0$. Solved when $F(x)=|x|$ by Huang-Zhao \cite{HuZ18}.

\bii $-1<p<0$, $q<1+p$, $p\neq q$. Solved by Mui \cite{Mui24} when $f$ is even and $F(x)=|x|$.

\bii $q>n-1$. Solved by Guang-Li-Wang \cite{GLW23} when $F$ is smooth and
$$
p<\left\{
\begin{array}{rcl}
-\frac{(n-1)q}{q-1}&\mbox{if}&q\geq n\\
-\frac{q}{q-n+1}&\mbox{if}&n-1<q< n.
\end{array} \right.
$$
\end{itemize}
\end{itemize}

The purpose of this paper is to prove the existence of a solution for some unsolved cases
of the continuous $L_p$ dual Minkowski problem when $q>0$ and $p<0$. Since the dual curvature
measure $\wt C_{p,q}(K,Q,\cdot)$ is origin-dependent, to be able to solve the problem, sometimes
it is assumed that the bodies $K$ and $Q$ are origin-symmetric and the measure $\mu$ is even.
That is, in the absolutely continuous case, the functions $f$, $F$ and $h$ are assumed to be even. Note that
origin-symmetry and evenness denote the invariance by reflection about the origin, that is an element of
the group $O(n)$. In this paper, we broaden the scope of study by considering more general group symmetry.
This then allows one to solve the equations \eqref{pde} and \eqref{meq} for classes of non-symmetric
bodies and non-even functions.

Let $G$ be a subgroup of $O(n)$. A subset $E\subset \rn$, or a function $f$ over $E$,
is called $G$-invariant if
\[
gE = E,  \ \ g\in G, \ \text{ or } f(gx) = f(x), \ \  g\in G.
\]
A Borel measure $\mu$ on $S^{sn-1}$ is called $G$-invariant if
 \[
 \mu (g E) = \mu(E), \ \ \text{for any } g\in G \text{ and any Borel set } E \subset S^{n-1}.
 \]

For $q>0$, let
\begin{equation}\label{q*}
q^*=\left\{
\begin{array}{rcl}
\frac{q}{q-n+1}&\mbox{if}&q\geq n\\
\frac{(n-1)q}{q-1}&\mbox{if}&1<q< n\\
\infty&\mbox{if}& 0<q\leq 1.
\end{array} \right.
\end{equation}
Note that $q^*>1$ is finite if $q>1$, $(q^*)^*=q$ and $n^*=n$. In addition, $q^*\geq n$ if $q\leq n$.

We will solve the continuous $L_p$ dual Minkowski problem for convex bodies with a group symmetry,
in the range of $q>0$ and $-q^*<p<0$.

\begin{theo}\label{pneqqpos-abs-cont}
Let $q>0$, $-q^*<p<0$, $G$ a closed subgroup of $O(n)$
without a non-zero fixed point, and
$Q$ a $G$-invariant star body in $\rn$.
If $\mu$ is a non-trivial, $G$-invariant, finite Borel
measure on $\sn$ with a density function
$f\in L^s(\sn)$, where $s>1$ if $q\leq 1$, and $s=\frac{1}{1+p/q^*}>1$ if $q>1$,
then there exists a $G$-invariant convex body $K$ in $\rn$ so that it solves
the measure equation $\widetilde{C}_{p,q}(K,Q;\cdot)=\mu$.
\end{theo}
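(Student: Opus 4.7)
The plan is a variational argument on the class $\kn_G$ of $G$-invariant convex bodies containing the origin in their interior, modeled on the strategy used for origin-symmetric bodies in \cite{HLYZ16, BLYZ19, Mui22, Mui24}. I would work with the scale- and $G$-invariant functional
$$\Phi(K) = -\frac{1}{p}\log\int_{\sn} h_K(v)^p\, d\mu(v) + \frac{1}{q}\log \widetilde V_q(K, Q),$$
where $\widetilde V_q(K,Q) = \frac{1}{n}\int_{\sn}\rho_K^q F^{n-q}\,d\hm$ is the $q$-th dual volume. By dilation invariance one may equivalently maximize $\widetilde V_q(K, Q)$ on the constraint set $\{K \in \kn_G : \int h_K^p\, d\mu = 1\}$. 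Once a maximizer $K_0 \in \kn_G$ is produced, the variational formula of \cite{LYZ18} for $\widetilde V_q(\cdot, Q)$, combined with the direct variation $\delta(\int h^p\, d\mu) = p\int h^{p-1}\delta h\, d\mu$, yields the Euler--Lagrange identity $\widetilde C_{p,q}(K_0, Q; \cdot) = \lambda \mu$ for some $\lambda > 0$; a dilation of $K_0$ absorbs $\lambda$ and delivers the body claimed in the theorem.

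The central difficulty is compactness. Given a maximizing sequence $\{K_j\} \subset \kn_G$ with $\int h_{K_j}^p\, d\mu = 1$, Blaschke selection produces a Hausdorff limit $K$, and the failure modes to exclude are $K$ being lower-dimensional or the origin lying on $\partial K$. Because $G$ has no nonzero fixed vector, $G$-invariance forces the centroid of any $G$-invariant convex body to be the origin, so the origin is always interior to $K$ whenever $K$ is full-dimensional; hence the real issue is subspace concentration, that is, $K_j$ flattening onto a proper linear subspace. Averaging such a concentration subspace over $G$ reduces to the case of a proper $G$-invariant subspace $L$, and the hypothesis on $G$ again ensures that $L$ can be taken proper after averaging.

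The subspace-concentration step is the main technical obstacle, and it is where the sharp hypotheses $p > -q^*$ and $f \in L^s(\sn)$ with $s = (1+p/q^*)^{-1}$ enter. Two inputs drive the argument. First, a sharp dual-volume decay of the type motivating the definition \eqref{q*} of $q^*$: for $G$-invariant $K$ contained in an $\varepsilon$-neighborhood of a proper linear subspace $L$, $\widetilde V_q(K, Q) \lesssim \varepsilon^{q/q^*}(\max h_K)^{q - q/q^*}$ (cf.\ the subspace-concentration analysis in \cite{BLYZ19, LuP21}). Second, a H\"older inequality $\int h_K^p f\, d\hm \le \|f\|_s \bigl(\int h_K^{-q^*}\, d\hm\bigr)^{-p/q^*}$, in which the conjugate exponent $s' = -q^*/p$ is calibrated so that the right-hand integrand is precisely $h_K^{-q^*}$; the matching $ps' = -q^*$ explains why the regularity threshold $s = (1+p/q^*)^{-1}$ is sharp. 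Combining these two estimates along any concentrating maximizing sequence forces $\widetilde V_q(K_j, Q) \to 0$: the H\"older bound converts the normalization $\int h^p f = 1$ into a concentration-dependent cap on $\max h_K$, which the dual-volume decay then uses to drive $\widetilde V_q$ to zero. This contradicts the maximization of $\Phi$, so the limit $K_0$ must lie in $\kn_G$, and the Euler--Lagrange step of the first paragraph completes the proof.
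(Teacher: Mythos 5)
Your high-level plan matches the paper's: a scale-invariant functional (the paper minimizes $\Phi(C)=\frac1p\log\int h_C^p\,d\mu-\frac1q\log\widetilde V_q(C,Q)$ on the class $\mathcal C$ of $G$-invariant bodies normalized by $\widetilde V_q(C,Q)=1$, which is the same optimization problem as yours under a rescaling), then Blaschke selection, then a variational/Euler--Lagrange step. The H\"older exponent you identify, $s'=-q^*/p$ with $ps'=-q^*$, is exactly the one used. However, the compactness step as you describe it has real gaps. First, the reduction ``averaging such a concentration subspace over $G$ reduces to the case of a proper $G$-invariant subspace'' does not work in general: when $G$ acts irreducibly on $\R^n$ (the most important case covered by the theorem, e.g.\ the symmetry group of a regular simplex), there simply is no proper $G$-invariant subspace of positive dimension, so the reduction produces either $\{0\}$ or $\R^n$. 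The paper's argument instead picks a direction $v_m$ where the radial function of $C_m$ is maximal, observes that $o\in\mathrm{conv}\,Gv_m$ because $G$ has no nonzero fixed point, and works with the thin slab $\Sigma_m=\{u\in\sn:\langle u,v\rangle\le R_m^{-1/2}\ \forall v\in Gv_m\}$; no $G$-invariant subspace ever appears. Second, the quantitative input you state, $\widetilde V_q(K,Q)\lesssim\varepsilon^{q/q^*}(\max h_K)^{q-q/q^*}$, does not match the box estimate (Lemma~\ref{qthIntrinsicBox}): for $K=[-\varepsilon,\varepsilon]\times[-R,R]^{n-1}$ with $q\geq n$ one has $\widetilde V_q\asymp\varepsilon R^{q-1}$, while $q/q^*=q-n+1$ would give $\varepsilon^{q-n+1}R^{n-1}$, which is smaller than the true value when $\varepsilon<R$ and hence false. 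The ingredient the paper actually uses is Chen's Blaschke--Santal\'o-type inequality for dual mixed volumes (Theorem~\ref{qthIntrinsicBS}), in the form $\int_{\sn}h_K^{-q^*}\,du\leq n\theta^{q^*}$ when $\widetilde V_q(K,Q)=1$; this bounds the other H\"older factor uniformly along the minimizing sequence, and combined with absolute continuity of $\mu$ on thin slabs yields $\int h_{C_m}^p\,d\mu\to 0$.

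There is also a gap in the Euler--Lagrange step that is specific to the group-symmetric setting and worth flagging. Because the minimization is carried out only over $G$-invariant bodies, the Wulff-shape perturbation argument produces the identity
\[
\int_{\sn}\varphi\,h_{\widetilde K}^{p-1}\,d\mu=\lambda\int_{\sn}\frac{\varphi}{h_{\widetilde K}}\,d\widetilde C_q(\widetilde K,Q;\cdot)
\]
only for \emph{$G$-invariant} continuous test functions $\varphi$, not for all continuous $\varphi$. To conclude equality of the two measures $\mu$ and $\lambda\,\widetilde C_{p,q}(\widetilde K,Q;\cdot)$ one must use that two $G$-invariant finite Borel measures on $\sn$ coincide if they agree on $G$-invariant continuous test functions, which is the content of Lemma~\ref{Invariant-Measure} (proved by averaging an arbitrary test function against the Haar measure of $G$). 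Without that lemma, the Euler--Lagrange identity underdetermines the measure. Adding the slab construction via the orbit $Gv_m$, replacing your decay estimate with Chen's dual Blaschke--Santal\'o inequality, and inserting the invariant-measure lemma would close the argument.
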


We observe that if a body or a measure is invariant under rotations $g_1, \ldots, g_m$, then
it is invariant under the group $G$ generated by $g_1, \ldots, g_m$.
Thus, classes of convex bodies invariant under subgroups of $O(n)$ are abundant.
We also observe that the solution convex body $K$ must have its centroid at the origin, since $K$ is $G$-invariant, and $G$ has no non-zero fixed point.
An important special case of $G$-invariance is origin-symmetry, which is
the case of $G=\{I, -I\}$ where $I$ is the identity of $O(n)$.
However, there are many more suitable subgroups $G\subset O(n)$,
as discussed in Section~\ref{sect-classes}. For example, take the symmetry
group of a regular simplex. 

The origin-symmetric case of the following statement was studied by
Chen-Chen-Li \cite{CCL21} using the flow method and approximation.
Here, we provide a simple direct proof for a more general statement, using the variational method.

Theorem \ref{pneqqpos-abs-cont}  above implies the following result.

\begin{theo}\label{pde-thm}
Let $q>0$, $-q^*<p<0$, $G$ a closed subgroup of $O(n)$
without a non-zero fixed point, and let
$F$ be a continuous homogeneous function of degree 1 in
$\rn$, with $F(x)>0$ if $x\neq 0$.
If $f$ is a non-vanishing, $G$-invariant, integrable function on $\sn$ with
$f\in L^s(\sn)$, where $s>1$ if $q\leq 1$, and $s=\frac{1}{1+p/q^*}>1$ if $q>1$,
then there exists a nonnegative
convex homogeneous function $h$ of degree 1 with $h(x)>0$ for $x\neq 0$ that solves equation \eqref{pde}.
\end{theo}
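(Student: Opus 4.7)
My plan is to deduce Theorem \ref{pde-thm} directly from Theorem \ref{pneqqpos-abs-cont} by realising the positive $1$-homogeneous function $F$ as arising from a $G$-invariant star body $Q$, so that the PDE \eqref{pde} is nothing but the pointwise form of the measure equation $\widetilde{C}_{p,q}(K,Q;\cdot) = f\,d\mathcal{H}^{n-1}|_{\sn}$.

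First I would associate to $F$ the star body
\[
Q := \{x \in \rn : F(x) \le 1\},
\]
which has radial function $\rho_Q(u) = 1/F(u)$ on $\sn$; continuity and positivity of $F$ off the origin make $Q$ a star body, and, taking $F$ (hence $Q$) to be $G$-invariant -- the natural compatibility condition imposed by the $G$-invariance of $f$ -- the measure $\mu := f\,d\mathcal{H}^{n-1}|_{\sn}$ is a non-trivial, finite, $G$-invariant Borel measure on $\sn$ with density $f \in L^s(\sn)$. Theorem \ref{pneqqpos-abs-cont} then yields a $G$-invariant convex body $K$ solving $\widetilde{C}_{p,q}(K,Q;\cdot) = \mu$. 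Because $G$ has no non-zero fixed vector, averaging any interior point of $K$ over the Haar measure on $G$ places the origin in the interior of $K$, so its support function $h := h_K$ extends to a positive, convex, $1$-homogeneous function on $\rn$.

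Next I would unwind the Lutwak--Yang--Zhang definition of $\widetilde{C}_{p,q}(K,Q;\cdot)$ in the absolutely continuous setting. Its density with respect to $\mathcal{H}^{n-1}|_{\sn}$ has an explicit expression involving $h(v)^{1-p}$, $\rho_K(\alpha_K(v))^{q-n}$, $\rho_Q(\alpha_K(v))^{n-q}$, and $\det(\nabla^2 h(v)|_{v^\perp})$, where $\alpha_K(v) = \nabla h(v)/|\nabla h(v)|$ is the radial Gauss map. Inserting $\rho_K(\alpha_K(v)) = |\nabla h(v)|$ and $\rho_Q(u) = 1/F(u)$ and invoking $1$-homogeneity of $F$, the factor $\rho_K(\alpha_K(v))^{q-n}\,\rho_Q(\alpha_K(v))^{n-q}$ collapses to $F(\nabla h(v))^{n-q}$, and the density identity becomes precisely \eqref{pde} after the overall normalising constant is absorbed into $f$.

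The main obstacle is technical rather than conceptual: Theorem \ref{pneqqpos-abs-cont} supplies only a convex body, so at first \eqref{pde} holds merely in the Alexandrov weak sense. Promoting it to a pointwise PDE with $\nabla h$ and $\nabla^2 h|_{v^\perp}$ as genuine derivatives requires invoking the classical regularity theory for Monge--Amp\`ere type equations with $L^s$ right-hand side, which for the prescribed range of $(p,q,s)$ is available and yields enough smoothness of $h_K$ almost everywhere on $\sn$. Apart from this regularity input, the entire argument is a clean bookkeeping translation between $F$ and $Q$ and between the integral identity and its density form.
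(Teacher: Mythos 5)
Your proposal matches the paper's intended (but unwritten) proof, for which the paper gives only the remark that Theorem~\ref{pneqqpos-abs-cont} implies Theorem~\ref{pde-thm}: one associates to $F$ the $G$-invariant star body $Q=\{x: F(x)\le 1\}$, so that $\rho\lsub{Q}=1/F$; applies Theorem~\ref{pneqqpos-abs-cont} to $\mu=f\,d\HH^{n-1}$; and unwinds the definition of $\widetilde C_{p,q}(K,Q;\cdot)$ into the density form~\eqref{pde}. You carry out the key bookkeeping correctly: writing $\balpha^*_K(v)=\nabla h(v)/|\nabla h(v)|$, the factor $\rho\lsub{K}(\balpha^*_K(v))^{q-n}\rho\lsub{Q}(\balpha^*_K(v))^{n-q}$ collapses, via $\rho\lsub{K}(\balpha^*_K(v))=|\nabla h(v)|$ and the $(-1)$-homogeneity of $\rho\lsub{Q}$, to $\rho\lsub{Q}(\nabla h(v))^{n-q}=F(\nabla h(v))^{q-n}$, and the left-over normalising constant $n$ can be absorbed by rescaling $h$ (possible because $p\neq q$). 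You are also right that $F$ must be taken $G$-invariant so that $Q$ is; this hypothesis is implicit in the statement of Theorem~\ref{pde-thm} and is needed for Theorem~\ref{pneqqpos-abs-cont} to apply.

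The one misstep is the closing paragraph. Equation~\eqref{pde} is to be read as the density form of the measure equation, i.e.\ in the Aleksandrov (weak) sense, and no Monge--Amp\`ere regularity theory is needed or tacitly invoked by the paper. By Aleksandrov's theorem, the support function $h_K$ of any convex body is twice differentiable $\HH^{n-1}$-a.e.\ on $\sn$, and the absolutely continuous part of the Monge--Amp\`ere (surface area) measure has density $\det\bigl(\nabla^2 h_K(v)|_{v^\perp}\bigr)$ a.e.; since $\widetilde C_{p,q}(K,Q;\cdot)=f\,d\HH^{n-1}$ is absolutely continuous, the singular part vanishes and~\eqref{pde} holds $\HH^{n-1}$-a.e.\ automatically. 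Invoking regularity theory with merely an $L^s$ right-hand side would not in any case produce classical pointwise second derivatives everywhere, so that appeal is both unnecessary and somewhat misdirected.
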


No uniqueness of the solution of the dual Minkowski problem holds in general. 
For example, multiple solutions exist in the case $q>2n$ and $f\equiv 1$, 
even if one assumes that the solution $h$ is even (see \cite{CCL21}).
Uniqueness of the solution to the $L_p$ $q$th-dual Minkowski problem \eqref{meq} 
is investigated in \cite{CCL21,LLL22,LiW,IvM23,HuI24}.
Orlicz versions of these Monge-Amp\`ere equations have been
considered by  \cite{LSYY22,FHL22,HLM23,XiZ20,GHWXY19,GHXY20,XYZ22,LiL20}.

Important related variants of the dual Minkowski problem are
the chord Minkowski problem
(see \cite{LXYZ}) and its $L_p$ version (see \cite{XYZZ23}  for $p>0$ and
\cite{YLia,YLib} for $p<0$, also \cite{GXZ} and \cite{XYZZ23}).
Another related problem is the affine dual Minkowski problem, 
proposed by Cai-Leng-Wu-Xi \cite{CLWX}.

The survey article \cite{HYZ25} gives a detailed description of Minkowski problems for
geometric measures with many references.

\section{Preliminaries and Dual Curvature Measures}

For more detailed information on convex geometry, we refer the reader to Gardner \cite{G06book}, Gruber \cite{Gruber07}
and Schneider \cite{Sch14}.

Our setting is {the} Euclidean $n$-space $\R^n$ with $n\geq 2$. Write
 $\langle\cdot,\cdot\rangle$ for the standard inner product and $|\cdot |$ for its induced norm.
 Denote the unit ball  by $B^n=\{x\in\R^n:|x|\leq 1\}$ and the unit sphere by $S^{n-1}=\partial B^n$.
 The notation $\HH^k(\cdot)$ stands for the $k$-dimensional Hausdorff measure
 normalized in a way that it coincides with the Lebesgue measure on $\R^k$,
 and we use the notation $V(\cdot)$ for the $n$-dimensional volume (Lebesgue measure).
 In particular, the volume of the unit ball is $\kappa_n=V(B^n)=\frac{\pi^{\frac n2}}{\Gamma(\frac n2 +1)}$
 and its surface area is  $\HH^{n-1}(S^{n-1})=n\kappa_n$, where $\Gamma$ is Euler's gamma function.

 We call a compact convex set $K\subset\R^n$ with non-empty interior a convex body.
 Denote by $\mathcal{K}^n_{o}$ the family of all convex bodies $K$ that contain the origin in their interior,
 that is, $0\in{\rm int}\,K$.

For a convex compact set $K\subset\R^n$, the support function $h_K : \rn\to \R$ is defined as
$$h_K(y)=\max \{\langle x,y\rangle : x\in K\}$$
where
\begin{equation}
\label{SupportFunctionTranslate}
h_{K-z}(y)=h_K(y)-\langle y,z\rangle
\end{equation}
for any compact convex set $K\subset\R^n$ and $y,z\in\R^n$. Note that the support function
is convex and homogeneous of degree $1$. Furthermore, the support function uniquely defines
a convex body.

A compact set $Q$ in $\rn$ is called star-shaped if
$\lambda x\in Q$ for any $x\in Q$ and $\lambda\in[0,1]$.
For a star-shaped set $Q$ in $\rn$, its radial function
$\rho\lsub{Q}: \mathbb{R}^{n}\backslash\left\{ 0\right\} \rightarrow\mathbb{R}$
is defined by
$$
\rho\lsub{Q}(x)=\max\left\{ \lambda:\text{ }\lambda x\in Q\right\} \text{.}
$$
The radial function is nonnegative and homogeneous of degree $-1$.
We define the family of star bodies $\mathcal S_o^n$ in $\rn$ whose radial functions
are positive and continuous.
Clearly,  $\mathcal{K}^n_{o}\subset \mathcal{S}^n_{o}$.
Every star body is uniquely determined by its radial function.

The support function and the radial function have the following transformation formulas
under a invertible linear transformation $\phi$:
\begin{align}
h_{\phi K}(y) &= h_K(\phi^t y), \label{htrans}\\
\rho\lsub{\phi Q}(x) &=\rho\lsub{K} (\phi^{-1}x). \label{rhotrans}
\end{align}

For a star body $Q\in\mathcal{S}_o^n$, let
\begin{align*}
\|x\|_Q  = \frac1{\rho\lsub{Q}(x)} \mbox{ \ \ for }x\in\R^n \setminus\{0\}.
\end{align*}

For a convex body $K\in\mathcal{K}^n_{o}$, its polar is the convex body
$K^*\in\mathcal{K}^n_{o}$ defined as
$$
K^*=\{x\in\R^n:\,\langle x,y\rangle\leq 1\,\;\forall y\in K\}.
$$
It satisfies $(K^*)^*=K$, $(\lambda K)^*=\lambda^{-1}K^*$ for $\lambda>0$.
If $K\subset C$ for $K,C\in\mathcal{K}^n_{o}$, then $C^*\subset K^*$. In addition,
$$
\rho\lsub{K^*}(y)=\frac1{h_K(y)},  \ \ \ y\in\rn\setminus \{0\}.
$$

For a convex body $K\subset\R^n$, if $v\in S^{n-1}$,
the support hyperplane $H(K,v)$ with unit normal $v$ is
\[
H(K,v) = \{x\in \rn : \langle x,v\rangle=h_K(v) \}.
\]
The intersection $F(K,v) = K\cap H(K, v)$ is the face of $K$
with exterior unit normal $v$.

For $x\in{\partial} K$, let the spherical image of $x$ be defined as
$$
{\pmb\nu}_K(\{x\})=\{v\in S^{n-1}:  x\in H(K,v) \}.
$$
For a Borel set $\eta\subset S^{n-1}$, the reverse spherical image is defined as
$$
{\pmb\nu}_K^{-1}(\eta)=\{x\in{\partial} K:\, {\pmb\nu}_K(x)\cap \eta
\neq \emptyset\}=\bigcup_{v\in\eta}F(K,v).
$$
If $K$ has a unique supporting hyperplane at $x$, then we say that $K$ is smooth at $x$. In this case ${\pmb\nu}_K(\{x\})$ contains exactly one element that we denote by $\nu_K(x)$
and call it the exterior unit normal of $K$ at $x$.

For $K\in \kno$, define  the {\it radial map} of $K$,
\[
r_K : \sn \to \partial K\qquad\text{by}\qquad r_K(u) = \rho_K(u) u \in \partial K, \ \ u\in\sn.
\]

For $\omega\subset\sn$, define the {\it radial Gauss image of $\omega$}  by
\[
\balpha_K(\omega) = \bu_K(r_K(\omega)) \subset \sn,
\]
or equivalently,
\begin{equation}\label{2.2-0}
\balpha_K(\omega) = \{ v\in \sn : \text{$r_K(u) \in H(K,v)$ for some $u\in\omega$} \},
\end{equation}
and thus, for $u\in\sn$,
\begin{equation}\label{2.2-1}
\balpha_K(u) = \{ v\in \sn : r_K(u) \in H(K,v)\}.
\end{equation}

For $K\in \kno$, Aleksandrov-Fenchel-Jessen's surface area measure $S(K,\cdot)$
is a finite Borel measure on $\sn$ defined by
\[
 S(K,\eta)=\HH^{n-1}({\pmb\nu}_K^{-1}(\eta)), \ \ \text{for Borel set } \eta \subset \sn,
\]
and Aleksandrov's integral curvature measure $J(K,\cdot)$ is a finite Borel measure on $\sn$ defined by
\[
J(K,\omega) = \HH^{n-1}(\balpha_K(\omega)), \ \ \text{for Borel set } \omega \subset \sn.
\]

The $L_p$ surface area measure $S_p(K,\cdot)$, $K\in \kno$, is defined by Lutwak \cite{L93a}
\[
dS_p(K,\cdot) = h_K^{1-p} \, dS(K,\cdot).
\]

The $L_p$ integral curvature measure $J_p(K, \cdot)$, $K\in \kno$, is defined by Huang-LYZ \cite{HLYZ18}
\[
dJ_p(K,\cdot) = \rho_K^p\, dJ(K,\cdot).
\]

According to \cite{HLYZ16} and \cite{LYZ18}, if $K\in\mathcal{K}^n_{o}$ and
$\eta\subset S^{n-1}$ is a Borel set,
then the reverse radial Gauss image of $\eta$ is
\begin{align*}
{\pmb\alpha}^*_K(\eta)&=\{u\in S^{n-1}: \rhok(u)u\in F(K,v)\text{ for some }v\in\eta\}\\
&=
\{u\in S^{n-1}: \rhok(u)u\in{\pmb\nu}_K^{-1}(\eta)\},
\end{align*}
which is Lebesgue measurable according to \cite[Lemma~2.2.4]{Sch14}.

For $K, Q\in \sno$ and $q\in\R$, Lutwak's $q$th dual mixed volume $\widetilde{V}_q(K,Q)$ is defined by
\[
\widetilde{V}_q(K,Q)= \frac1n\int_{\sn}\rho_K^{q}(u)\rho^{n-q}_Q(u)\, du.
\]
When $K\in\kno$, the localization (differential) of the $q$th dual mixed volume
is a Borel measure on $S^{n-1}$, $\widetilde{C}_q(K,Q;\cdot)$, called
the $q$th dual curvature measure of $K$ with respect to $Q$ and defined
 by Lutwak-Yang-Zhang  \cite{LYZ18} (by Huang-LYZ \cite{HLYZ16} in the case of $Q=B^n$) as
\begin{equation}\label{dualcurvmeasure}
\widetilde{C}_q(K,Q;\eta)=\frac1n\int_{{\pmb\alpha}^*_K(\eta)}\rho_K^{q}(u)\rho^{n-q}_Q(u)\, du.
\end{equation}

Based on Huang-LYZ \cite{HLYZ16}, LYZ \cite{LYZ18} proved the variational formula for
the $q$th dual mixed volume. Namely,
for $q\neq 0$, $K\in\mathcal{K}_{o}^n$, $Q\in\mathcal{S}_o^n$ and
continuous $\varphi:S^{n-1}\to\R$, if $K_t$ is the corresponding Wulff-shape
$$
K_t=\{x\in\R^n:\,\langle x,v\rangle\leq h_K(v)+t\varphi(v)\,\;\forall v\in S^{n-1}\}
$$
when $|t|$ is small, then
\begin{equation}
\label{dualAlexandrov}
\lim_{t\to 0}\frac{\widetilde{V}_q(K_t,Q)-\widetilde{V}_q(K,Q)}{t}
=q\int_{S^{n-1}}\frac{\varphi(v)}{h_K(v)}\,d \widetilde{C}_q(K,Q;v).
\end{equation}
According to  Lemma~5.1 in \cite{LYZ18},
 if $q\neq 0$ and the Borel function $g:\,S^{n-1}\to \R$ is bounded, then
\begin{equation}
\label{intgCqoin}
\int_{S^{n-1}}g(v)\,d\widetilde{C}_{q}(K,Q;v)
=\frac1n\int_{\partial' K} g(\nu_K(x))\langle \nu_K(x),x\rangle \rho_Q^{n-q}(x)\,d\HH^{n-1}(x).
\end{equation}

The advantage of introducing the star body $Q$ for dual curvature measures
is also apparent in the equi-affine invariance formula
(see Theorem~6.8 in \cite{LYZ18}) stating that
if $\phi\in{\rm SL}(n,\R)$, then
\begin{equation}
\label{Cqaffineinv0}
\int_{S^{n-1}}g(v)\,d\widetilde{C}_{q}(\phi K,\phi Q;v)=
\int_{S^{n-1}} g\left(\frac{\phi^{-t} v}{\|\phi^{-t} v\|}\right)d\widetilde{C}_{q}( K,Q;v),
\end{equation}
{where $\phi^{-t}$ denotes the transpose of the inverse of $\phi$.}

$L_p$ dual curvature measures were also introduced by Lutwak-Yang-Zhang \cite{LYZ18}.
They provide a common framework that unifies several families of geometric measures
in the ($L_p$) Brunn-Minkowski theory and the dual theory: $L_p$ surface area measures,
$L_p$ integral curvature measures, and dual curvature measures, cf. \cite{LYZ18}.
For $p, q\in \R$,  $Q\in\mathcal{S}_{o}^n$, and $K\in\mathcal{K}_{o}^n$,
we define
the $L_p$ $q$th dual curvature measure (or simply $(p,q)$-dual curvature measure)
$\widetilde{C}_{p,q}(K,Q,\cdot)$ of $K$ with respect to $Q$  by the formula
\begin{equation}\label{lpdualcurvmeasureQ}
d\widetilde{C}_{p,q}(K,Q,\cdot)=h_K^{-p}d\widetilde{C}_q(K,Q,\cdot).
\end{equation}
We list three sub-families of $L_p$ dual curvature measures of $K\in\mathcal{K}_{o}^n$:
\begin{itemize}
\bi $L_p$ surface area measure.
$S_p(K,\cdot)= n\widetilde{C}_{p,n}(K,B^n;\cdot)$.

\bi $L_p$ integral curvature.  $J_p(K,\cdot) =n \wt C_{p,0}(K^*,B^n;\cdot)$.

\bi Dual curvature measure. $\wt C_{0,q}(K,B^n;\cdot)$.
\end{itemize}

LYZ  \cite{LYZ18} showed
that the $L_p$ dual curvature measures are weakly continuous.

\begin{lemma}[LYZ \cite{LYZ18}]
\label{CpqcontQ}
For $p, q\in\R$ and $Q\in\mathcal{S}_o^n$, if $K_m \in \kno$ with $K_m \to  K\in \kno$, then
$\widetilde{V}_{q}(K_m,Q) \to \widetilde{V}_{q}(K,Q)$, and
$\widetilde{C}_{p,q}(K_m,Q;\cdot) \to \widetilde{C}_{p,q}(K,Q;\cdot)$ weakly.
\end{lemma}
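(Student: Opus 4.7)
The plan is to reduce the weak convergence of $\widetilde{C}_{p,q}(K_m,Q;\cdot)$ to that of $\widetilde{C}_q(K_m,Q;\cdot)$ via the defining relation \eqref{lpdualcurvmeasureQ}, and to handle the latter together with the convergence of dual mixed volumes by means of a single change-of-variable identity on $\sn$. First I would use that $K_m\to K$ in the Hausdorff metric with $K_m,K\in\kno$ implies uniform convergence $\rho_{K_m}\to\rho_K$ and $h_{K_m}\to h_K$ on $\sn$, with common positive lower and finite upper bounds; in particular $h_{K_m}^{-p}\to h_K^{-p}$ and $\rho_{K_m}^{q}\to\rho_K^{q}$ uniformly for any $p,q\in\R$. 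Convergence $\widetilde{V}_q(K_m,Q)\to\widetilde{V}_q(K,Q)$ then follows at once by dominated convergence applied to $\tfrac{1}{n}\int_{\sn}\rho_{K_m}^q\rho_Q^{n-q}\,du$.

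For the weak convergence of $\widetilde{C}_q(K_m,Q;\cdot)$, I would apply \eqref{dualcurvmeasure} together with a change of variables on the full-measure set of $u\in\sn$ at which $\balpha_{K_m}$ is single-valued to obtain, for every continuous $g:\sn\to\R$,
\[
\int_{\sn}g(v)\,d\widetilde{C}_q(K_m,Q;v)=\frac{1}{n}\int_{\sn}g\bigl(\balpha_{K_m}(u)\bigr)\rho_{K_m}^{q}(u)\,\rho_Q^{n-q}(u)\,du.
\]
I would then establish that $\balpha_{K_m}(u)\to\balpha_K(u)$ for almost every $u\in\sn$: for $\HH^{n-1}$-almost every $u$ the boundary point $r_K(u)=\rho_K(u)u$ is a smooth point of $\partial K$, so $\balpha_K(u)$ is a singleton, and the Hausdorff convergence of $K_m$ to $K$ together with the closed-graph property of the outer-normal correspondence forces any subsequential limit of $\balpha_{K_m}(u)$ to coincide with $\balpha_K(u)$. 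The integrands are uniformly bounded by $\|g\|_\infty(\sup_m\|\rho_{K_m}\|_\infty)^{|q|}(\|\rho_Q\|_\infty+\|1/\rho_Q\|_\infty)^{|n-q|}$, so dominated convergence yields the desired weak convergence.

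Finally, to pass to general $p$, I would use \eqref{lpdualcurvmeasureQ}: for continuous $g:\sn\to\R$,
\[
\int_{\sn}g\,d\widetilde{C}_{p,q}(K_m,Q;\cdot)=\int_{\sn}g\cdot h_{K_m}^{-p}\,d\widetilde{C}_q(K_m,Q;\cdot).
\]
Writing $g\,h_{K_m}^{-p}=g\,h_K^{-p}+g(h_{K_m}^{-p}-h_K^{-p})$ and observing that the total masses $\widetilde{C}_q(K_m,Q;\sn)=n\widetilde{V}_q(K_m,Q)$ are bounded in $m$ by the first paragraph, the first summand converges by the weak convergence already established, while the second tends to zero uniformly. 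The main obstacle I anticipate is the almost-everywhere convergence $\balpha_{K_m}\to\balpha_K$: although standard, it rests on a nontrivial combination of the $\HH^{n-1}$-a.e.\ smoothness of $\partial K$ and a compactness argument for outer-normal cones, and is the only step where the geometric structure of convex bodies enters in an essential way.
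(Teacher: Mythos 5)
The paper does not actually give a proof of this lemma; it is cited verbatim from LYZ \cite{LYZ18}, so there is no ``paper's own proof'' to compare against. On the merits, your argument is correct and follows the standard route. The key identity you write,
\[
\int_{\sn}g(v)\,d\widetilde{C}_q(K_m,Q;v)=\frac{1}{n}\int_{\sn}g\bigl(\alpha_{K_m}(u)\bigr)\rho_{K_m}^{q}(u)\,\rho_Q^{n-q}(u)\,du,
\]
is the radial-map reformulation of \eqref{dualcurvmeasure}, valid because $\balpha_{K_m}(u)$ is a singleton for $\HH^{n-1}$-a.e.\ $u$ (a countable intersection of full-measure exceptional sets over $m$ handles the dependence on $m$). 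This is the radial companion of the boundary formula \eqref{intgCqoin} already recorded in the paper, and is better suited to the continuity argument since the domain of integration is the fixed sphere $\sn$ rather than the varying $\partial K_m$.

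Two small points worth making explicit. First, the a.e.\ convergence $\alpha_{K_m}(u)\to\alpha_K(u)$ rests on: (i) $r_{K_m}(u)\to r_K(u)$ from uniform convergence of radial functions; (ii) if $v_m\in\balpha_{K_m}(u)$, then $h_{K_m}(v_m)=\langle r_{K_m}(u),v_m\rangle$, and passing to a subsequential limit $v$ using uniform convergence of support functions gives $h_K(v)=\langle r_K(u),v\rangle$, i.e.\ $v\in\balpha_K(u)$; (iii) for a.e.\ $u$, $\balpha_K(u)$ is a singleton, because the radial map $r_K$ is bi-Lipschitz and the set of non-smooth boundary points of $K$ has $\HH^{n-1}$-measure zero. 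You gesture at all of this, and it is correct, but it is the one place where a referee would want the details. Second, in the reduction from $\widetilde{C}_{p,q}$ to $\widetilde{C}_q$, your splitting $g\,h_{K_m}^{-p}=g\,h_K^{-p}+g\,(h_{K_m}^{-p}-h_K^{-p})$ together with the uniform bound on the total masses $n\widetilde{V}_q(K_m,Q)$ is exactly the right way to combine weak convergence of the base measures with uniform convergence of the densities. The proof is complete as written, modulo filling in the routine measurability of a single-valued selection of $\balpha_{K_m}$.
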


\section{Classes of Convex Bodies with Group Symmetry}\label{sect-classes}

First, we describe constructions of closed subgroups of $O(n)$ that have no non-zero fixed points
and do not contain the negative identity $-I$ of $O(n)$.  Then, we construct convex bodies
invariant under such a subgroup $G$. It is important to note that these classes convex bodies are typically not origin-symmetric.
They provide geometric significance for the Monge-Amp\`ere equation \eqref{pde} with group symmetry
and are also of independent geometric interest.

For a closed subgroup $G$ of $O(n)$, recall the decomposition of $\rn$ into $G$-invariant
irreducible subspaces (see Fulton-Harris \cite{FuH04}), $\rn=\oplus_{i=1}^kV_i$, where $V_i$ are
pairwise orthogonal $G$-invariant irreducible subspaces of dimension at least 1.
The action of $G$ is irreducible on $V_i$,
if $V_i$ has no proper $G$-invariant subspaces with dimension at least $1$.
In addition, any $G$-invariant linear subspace $V\subset \R^n$ is the direct sum of some of the $V_i$'s.

If $G$ has a non-zero fixed point $x_1\in\R^n\backslash\{o\}$, that is $gx_1=x_1$, $x_1\neq 0$,
for any $g\in G$,
then $\R x_1$ is an invariant subspace and  $\R x_1=V_i$ for some $i\in\{1,\ldots,k\}$.
This implies that $G$ has no non-zero fixed point if ${\rm dim}\,V_i\geq 2$.

Denote by $O(V)$ the orthogonal group in the subspace $V\subset \rn$ and $I_V$ the identity
of $O(V)$.
We observe that the condition that $-I\not\in G$ can be ensured if $-I_{V_1}\not\in G|_{V_1}$, where
$G|_{V_1}$ is the subgroup of $O(V_1)$ formed by the restrictions of the elements of $G$ onto the $G$-invariant subspace $V_1$.

By the observations above, one can construct a closed subgroup $G$ of $O(n)$ that has no non-zero fixed
point and does not contain the negative identity $-I$.

\begin{const}[\bf Subgroup $G\subset O(n)$ with no non-zero fixed point and $-I\not\in G$]
\label{group-examples}	\

\begin{itemize}
\bi Consider the decomposition of pairwise orthogonal linear subspaces,
\[
\R^n=V_1\oplus \cdots \oplus V_k, \ \ \ {\rm dim}\,V_i\geq 2,\ i=1,\ldots,k.
\]

\bi Choose some closed subgroup $G_i\subset O(V_i)$ in such a way that each $G_i$'s action on $V_i$ is irreducible, and  $-I_{V_1}\not\in G_1$.

\bi Let
\[
G=G_1\oplus \cdots \oplus G_k.
\]

\bi Examples of $G_1$.  For simplicity, let $G_1=H$ and $V_1=\R^m$, $m\geq 2$. The following are
examples that the action of $H$ on $\R^m$ is irreducible and $-I_{\R^m}\not\in H$:
\begin{itemize}
\sir{i} The symmetry group of a regular simplex in $\R^m$ whose centroid is the origin. This group is isomorphic  to $S_{m+1}$, the group of permutations of $(m+1)$ elements.
		
\sir{ii} The group of orientation preserving symmetries (symmetries that are elements of ${\rm SO}(m)$) of  a regular simplex in $\R^m$ whose centroid is the origin. This group is isomorphic  to the  alternating group, the index $2$ subgroup $A_{m+1}\subset S_{m+1}$.
		
\sir{iii} If $m\geq 3$ is odd, then take the group of orientation preserving symmetries of the cube $[-1,1]^n$.
	
\sir{iv} If $m\geq 3$ is odd, then take any subgroup $H\subset {\rm SO}(m)$ acting irreducibly on $\R^m$.
	
\sir{v} For $m=2$ and an odd $\ell\geq 3$, take the group of orientation preserving symmetries (rotations) of a regular $\ell$-gon  whose centroid is the origin. This group is isomorphic  to the cyclic group $C_{\ell}$.
\end{itemize}
\end{itemize}
\end{const}

Naturally, if $m$ is odd, then (ii) and (iii) are examples for (iv). We note that for $m=3,5$, Zimmermann \cite{Zim12} lists all subgroups of $SO(m)$ acting irreducibly on $\R^m$ and discusses properties of subgroups of $SO(m)$ in higher dimensions. One notable example is the group of oriention-preserving symmetries of a regular dodecahedron, which is actually $A_5$.
	
The monograph Fulton-Harris \cite{FuH04} describes irreducible actions of many finite groups. As $A_d\subset S_d$ has index $2$,  for an irreducible action of $S_d$ on $\R^m$ via elements of $O(m)$, the restriction from $S_d$ to $A_d$ acts via elements of $SO(m)$
(see Fulton-Harris \cite{FuH04}, Section 5.1, for the construction of such representations and the description
of the resulting action of $A_d$ to be still irreducible).

\vspace{10pt}

Given a closed subgroup $G\subset O(n)$ that has no non-zero fixed point and does not contain $-I$,
we now construct $G$-invariant convex bodies that are not origin-symmetric. Since the centroid of a convex
body is equivariant with respect to linear transformations, the centroid of a $G$-invariant convex body
is necessarily $G$-invariant. However, $G$ has no non-zero fixed point, and thus the centroid of any such $G$-invariant
convex body is necessarily the origin.

\begin{const}[\bf Non-symmetric, centered, $G$-invariant convex bodies, I] \label{Invariant-Body-intersection}
 Let $G\subset O(n)$ be a finite subgroup that has no non-zero fixed point and does not contain $-I$,
 and let $C$ be a convex body in $\kno$ with unique minimal radius
 $\rho\lsub{C}(u_1) = \min_{u\in\sn} \rho\lsub{C}(u)$.
 Then for almost all $h\in O(n)$, the convex body  $K=\cap\{ghC: \,g\in G\}$ is $G$-invariant and not
 origin-symmetric.
\end{const}

\begin{proof} It is clear that $K$ is $G$-invariant.
Let $r=\rho\lsub{C}(u_1)$ and $z=ru_1$. Then
\[
rB^n \subset C, \ \  rB^n \cap \partial C =\{z\}, \ \text{ and  }\ -z \in \text{int} C.
\]
 As $-I\not\in G$, for each $g\in G\backslash \{I\}$, there exists an $(n-1)$-dimensional linear subspace $L_g\subset\R^n$ that contains all $x\in\R^n$ satisfying $gx=-x$. Since $G$ is finite,
for almost all $h\in O(n)$ with respect to the Haar measure of $O(n)$,
$hz\not\in \cup \{L_g: g\in G\backslash \{I\}\}$. Consequently, $-hz \not\in \cup\{ghz : g\in G\}$.
For such an $h$, let  $K=\cap\{ghC:\,g\in G\}$. We have
\[
rB^n \subset K, \ \ rB^n \cap \partial K = \cup\{ghz : g\in G\}, \ \text{ and }\  -hz \in \text{int} K.
\]
Thus, $hz \in \partial K$ and $-hz \in \text{int} K$. Therefore, $K$ is not origin-symmetric.
\end{proof}

In fact, for any convex body $C_0\subset\R^n$, $\partial C_0$ is twice differentiable in the Alexandrov sense for $\mathcal{H}^{n-1}$ a.e. points of $\partial C_0$. If $z_0\in \partial C_0$ is such a point, then there exists $r>0$ and $x_0\in C_0$ such that $x_0+rB^n\subset C_0$ and $(x_0+rB^n)\cap \partial C_0=\{z_0\}$, and hence one may choose $C=C_0-x_0$ in the construction above.

The next construction is, in some sense, dual to the one in Construction~\ref{Invariant-Body-intersection}.

\begin{const}[\bf Non-symmetric, centered, $G$-invariant convex bodies, II]	\label{Invariant-Body-DVcell}
 Let $G\subset O(n)$ be a finite subgroup that has no non-zero fixed point and does not contain $-I$,
 and let $C$ be a convex body in $\kno$ with unique maximal radius
 $\rho\lsub{C}(u_1) = \max_{u\in\sn} \rho\lsub{C}(u)$.
 Then for almost all $h\in O(n)$, the convex body  $K=\cap\{ghC: \,g\in G\}$ is $G$-invariant and not
 origin-symmetric.
\end{const}

\begin{proof}	
It is obvious that $K$ is $G$-invariant.
Let $R=\rho\lsub{C}(u_1)$ and $z=ru_1$. Then
\[
C\subset RB^n, \ \  RB^n \cap \partial C =\{z\}, \ \text{ and  }\ -z \not\in C.
\]
 As $-I\not\in G$, for each $g\in G\backslash \{I\}$, there exists an $(n-1)$-dimensional linear subspace $L_g\subset\R^n$ that contains all $x\in\R^n$ satisfying $gx=-x$. Since $G$ is finite,
for almost all $h\in O(n)$ with respect to the Haar measure of $O(n)$,
$hz\not\in \cup \{L_g: g\in G\backslash \{I\}\}$, and consequently, $-hz \not\in \cup\{ghz : g\in G\}$.
For such an $h$, let  $K=\cap\{ghC:\,g\in G\}$. We have
\[
K\subset RB^n, \ \ RB^n \cap \partial K = \cup\{ghz : g\in G\}, \ \text{ and }\  -hz \not\in K.
\]
Thus, $hz \in \partial K$ and $-hz \not\in K$. Therefore, $K$ is not origin-symmetric.
\end{proof}

For any convex body $C_0\subset\R^n$, let $x_0$ be the circumcenter of $C_0$; namely,  $C_0\subset x_0+R_0B^n$ where $R_0$ is the minimal radius of ball containing $C_0$. In this case, there exists $k\in\{1,\ldots,n\}$ and $y_0,y_1,\ldots,y_k\in C_0\cap \partial (x_0+R_0B^n)$ such that $x_0$ lies in the relative interior of ${\rm conv}\{y_0,\ldots,y_k\}$. We move $x_0$ into a position $x'\in {\rm conv}\{y_0,\ldots,y_k\}$ away from $y_0$, such that $x'\neq x_0$ and $x_0\in {\rm conv}\{x',y_0\}$. Hence, we may choose $C=C_0-x'$ and $R=\|y_0-x'\|$ in the construction above. If $C_0$ is a polytope, then $K$ is a polytope; however, if $C_0$ has $C^1$ boundary, then $K$ has $C^1$ boundary, as well.

In Constructions~\ref{Invariant-Body-intersection} and \ref{Invariant-Body-DVcell}, one does not have much control of the boundary of the resulting $G$-invariant convex body $K$. In
Construction~\ref{Invariant-Body-convexhull}, we shall construct $\partial K$ piece by piece. The idea is as follows:

For example, if $G$ is the group of orientation preserving symmetries of a regular $m$-gon $P$ centered at the origin for odd $m\geq 3$, then let $v_1,\ldots,v_m$ be the vertices of $P$ in this order, and let $\ell^+_j$ be the supporting half-plane containing $P$ such that the bounding line contains $v_j$ and is orthogonal to $v_j$. Then we can arbitrarily prescribe the part $\sigma_K$ between $v_1$ and $v_2$ of the boundary of a $G$-invariant convex body $K$ with $v_1,\ldots,v_m\in\partial K$, as long as the convex arc $\sigma_K$ satisfies $\sigma_K\subset \ell^+_1\cap \ell^+_2$. To ensure $K$ is not $o$-symmetric, we also make sure that $-v_{\frac{m+3}2}\not\in\sigma_K$.

\begin{const}[\bf Non-symmetric, centered, $G$-invariant convex bodies, III]	 \label{Invariant-Body-convexhull}
Let $G\subset O(n)$ be a finite subgroup that has no non-zero fixed point and does not contain $-I$.
For $g\in G\backslash \{I\}$, let $L_g$ and $\wt L_g$ be $(n-1)$-dimensional linear subspaces in $\rn$ 
that contain all solutions $x\in\R^n$ satisfying $gx=-x$ and $gx=x$ respectively. 
Fix a $\tilde{z}\in S^{n-1}$ such that $\tilde{z}\not\in L_g\cup \widetilde{L}_g$ for any $g\in G\backslash \{I\}$. Then $g\tilde{z}\neq -\tilde{z}$ and $g\tilde{z}\neq h\tilde{z}$ for any $g\in G$ and $h\in G\backslash \{g\}$.
Consider the Dirichlet-Voronoi cell of $\tilde{z}$ with respect to the orbit $\{g\tilde{z}\,:g\in G\}$; namely,
\begin{align*}
D=&\{x\in\R^n:\|x-\tilde{z}\|\leq \|x- g\tilde{z}\|\mbox{ for }g\in G \}\\ =&\{x\in\R^n:\langle g\tilde{z}-\tilde{z},x\rangle\leq 0\mbox{ for }g\in G \}.
\end{align*}
Observe that $D$ is a polyhedral cone that is the closure of the  fundamental domain for the action of $G$. Namely, for any $x\in\R^n$, there exists some $g\in G$ such that $x\in gD$. However,
${\rm int}(gD)\cap {\rm int}(hD)=\emptyset$ for $g,h\in G$ with $g\neq h$.

We now indicate a way to construct $G$-invariant convex bodies by prescribing their intersection with $D$ rather arbitrarily. Precisely, we construct a convex body $K_0=K\cap D$
such that $K=\cup\{gK_0:\,g\in G\}$ is a convex body, and hence $K$ is $G$-invariant with
 $\partial K_0\cap{\rm int}\,D\subset\partial K$.
 Let $\mathcal{F}_i$ be the (finite) set of $i$-dimensional faces of $D$, $i=1,\ldots,n$. In addition,
we choose representatives $F_{i,1},\dots,F_{i,k_i}\in  \mathcal{F}_i$, $k_i\geq 1$, for $i=2,\ldots,n$ such that any $F\in  \mathcal{F}_i$ can be written in the form $F=gF_{i,j}$ for some $g\in G$ and $F_{i,j}$, but $F_{i,j}=gF_{i,\ell}$ for
$g\in G$, and $1\leq \ell<j\leq k_i$ implies that $g=I$.
To actually construct $D\cap K$ for the $G$-invariant convex body $K$, first let $f\cap S^{n-1}\in\partial K$ for any $1$-dimensional face $f$ of $D$. Now we continue by induction on $i=2,\ldots,n$, while preserving the following  properties:
\begin{itemize}
\sir{i} If $F,gF\in \mathcal{F}_i$ for $i=1,\ldots,n-1$ and $g\in G$, then $K\cap(gF)=g(K\cap F)$.
\sir{ii} For $F_{i,j}$ and $x\in F_{i,j}\cap K$, if
\begin{itemize}
\bii either $M\in \mathcal{F}_{i-1}$
and $u\in S^{n-1}\cap {\rm lin}\,M$ is an exterior normal at a
$y\in \partial K\cap{\rm relint}\,M$;

\bii or $M=g F_{i,\ell}\subset D$ for $1\leq \ell<j$ and $g\in G$,
and $u\in S^{n-1}\cap {\rm lin}\,M$ is an exterior normal at a
$y\in \partial K\cap{\rm relint}\,M$,
\end{itemize}
then
$\langle x,u\rangle\leq \langle u,y\rangle$.
	
\end{itemize}

Following this procedure, we obtain a $G$-invariant convex body $K$. To ensure that $K$ is not $o$-symmetric, we act as follows. By perturbing $\tilde{z}$, we obtain a $z_0\in S^{n-1}\cap {\rm int}\,D$ such that $-z_0\neq g z_0$ for any $g\in G\backslash\{I\}$,  and $-z_0\in g_0({\rm int}\,D)$ for a $g_0\in G\backslash\{I\}$. In particular, $-z_0=g_0y_0$ for a $y_0\in {\rm int}\,D$ where $y_0\neq z_0$. Now in the final step of our construction (when $i=n$ above), we just ensure that $\varrho_K(y_0)\neq \varrho_K(z_0)$.
\end{const}

Given some classes of $G$-invariant convex bodies, one can construct $G$-invariant homogeneous
functions. Indeed, 
by transformation formulas \eqref{htrans} and \eqref{rhotrans}, one deduces that the support function
and the radial function of a $G$-invariant convex body are $G$-invariant. This 
provides examples of $G$-invariant homogeneous functions, and hence examples for the $G$-invariant
Monge-Amp\`ere type equation \eqref{pde}.

\section{Estimates for Dual Quermassintegrals}
\label{secDualIntVolumeBS}

One key step to solving the $L_p$ dual Minkowski problem is obtaining estimates for dual quermassintegrals, and
the following lemma provides essentially optimal ones.
Special cases, equivalents and more general versions were shown
by various authors, see for example \cite{JLW15, HaodiChen, HLYZ16, Zha18, BLYZ19}.
Chen \cite{HaodiChen} gave a proof for the exact estimates below and applied them
to prove a non-sharp Blaschke-Santal\'o-type inequality and its reverse inequality
(Theorem~\ref{qthIntrinsicBS}) for dual mixed volumes. To be self-contained,
we include an alternate proof.

\begin{lemma}
\label{qthIntrinsicBox}
Let $q>0$, $0<a_1\le \ldots \le a_n$, and
$R=[-a_1, a_1]\times \cdots \times [-a_n,a_n]$. There exists a constant $c_{n,q}$ such that
\[
\tilde V_q(R) \le c_{n,q} \begin{cases}
a_1\cdots a_i a_{i+1}^{q-i} &\text{if } i<q<i+1, i=0, \ldots, n-2, \\
a_1\cdots a_{n-1} a_{n}^{q-n+1} &\text{if } q> n-1, \\
a_1\cdots a_q \big(1+\log\frac{a_{q+1}}{a_q}\big) &\text{if } q=1, \ldots, n-1.
\end{cases}
\]
The reverse inequality is also true for a different constant $c_{n,q}$.
\end{lemma}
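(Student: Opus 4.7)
The first step is to recast $\tilde V_q(R)=\tilde V_q(R,B^n)$ as a Cartesian integral via polar coordinates,
$$\tilde V_q(R)=\frac{1}{n}\int_{\sn}\rho_R(u)^q\,du=\frac{q}{n}\int_R|x|^{q-n}\,dx,$$
and to observe that $M(x):=\max_j|x_j|$ satisfies $M(x)\le|x|\le\sqrt n\,M(x)$ on $\rn$. Hence $|x|^{q-n}$ is comparable to $M(x)^{q-n}$ up to a multiplicative constant depending only on $n$ and $q$, and it suffices to estimate $I(R):=\int_R M(x)^{q-n}\,dx$ from above and below by the claimed expressions in each of the three cases.

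For $q>n-1$ the estimates are elementary: using $|x|\le\sqrt n\,a_n$ on $R$ (when $q\ge n$), or $|x_n|\le|x|$ together with the convergence of $\int_{-a_n}^{a_n}|t|^{q-n}\,dt$ (when $n-1<q<n$), yields the upper bound $\le C\,a_1\cdots a_{n-1}a_n^{q-n+1}$, and restricting to the sub-slab $\{x\in R:|x_n|\ge a_n/2\}$ gives the matching lower bound. For $i<q<i+1$ with $0\le i\le n-2$, I split coordinates as $x=(y,z)$ with $y\in\R^{i+1}$, extend the $z$-integration from $[-a_{i+2},a_{i+2}]\times\cdots\times[-a_n,a_n]$ to all of $\R^{n-i-1}$ (permissible since the integrand is nonnegative), and substitute $z=|y|w$ to reduce the inner integral to $|y|^{q-i-1}\cdot\int_{\R^{n-i-1}}(1+|w|^2)^{(q-n)/2}\,dw$; the constant integral is finite precisely because $q<i+1$ (its radial integrand is $r^{q-i-2}$). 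The remaining integral $\int_{B_{i+1}}|y|^{q-i-1}\,dy$ over the box $B_{i+1}:=[-a_1,a_1]\times\cdots\times[-a_{i+1},a_{i+1}]$ is bounded above via $|y|\ge|y_{i+1}|$ and the integrability of $|t|^{q-i-1}$ (since $q-i\in(0,1)$), giving $C\,a_1\cdots a_i a_{i+1}^{q-i}$; the matching lower bound comes from restricting to the sub-box $[-a_1,a_1]\times\cdots\times[-a_i,a_i]\times[-a_{i+1},a_{i+1}]^{n-i}\subset R$, on which $|x|\le\sqrt n\,a_{i+1}$.

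The integer case $q\in\{1,\ldots,n-1\}$ is the most delicate, since the coordinate-splitting argument produces a divergent radial integral at this boundary exponent. Here I use a full layer-cake decomposition. Since $\operatorname{vol}(R\cap\{M\le t\})=2^n\prod_j\min(t,a_j)$ is piecewise of the form $2^n a_1\cdots a_j\,t^{n-j}$ on $[a_j,a_{j+1}]$, writing $M^{q-n}=(n-q)\int_M^\infty t^{q-n-1}\,dt$ and swapping orders of integration gives
$$I(R)=(n-q)\,2^n\sum_{j=0}^n a_1\cdots a_j\int_{a_j}^{a_{j+1}}t^{q-j-1}\,dt\qquad(a_0:=0,\ a_{n+1}:=\infty).$$
The single term $j=q$ evaluates to $(n-q)\,2^n a_1\cdots a_q\log(a_{q+1}/a_q)$, precisely the source of the logarithm, while every other term is a monomial in $a_1,\ldots,a_n$ that the monotonicity $a_1\le\cdots\le a_n$ allows one to bound by a constant multiple of $a_1\cdots a_q$.

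The main bookkeeping obstacle is exactly this last step: verifying uniformly across all configurations that each $j\ne q$ term is dominated by $a_1\cdots a_q$, and for the reverse inequality that at least one such term (e.g.\ $j=q+1$ when $q+1\le n-1$, or $j=n$ when $q=n-1$, both evaluating to positive multiples of $a_1\cdots a_q$) always contributes $\gtrsim a_1\cdots a_q$, so that the ``$+1$'' in $1+\log(a_{q+1}/a_q)$ is recovered even when the $a_i$'s are near-equal and the logarithmic factor degenerates to $0$.
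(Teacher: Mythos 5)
Your overall strategy matches the paper's for the fractional and super-critical cases: both reduce $\tilde V_q(R)$ to $\frac{q}{n}\int_R|x|^{q-n}\,dx$, and then split coordinates, extend the tail integration, and restrict to sub-boxes for the lower bounds. For the integer case $q\in\{1,\ldots,n-1\}$, however, you take a genuinely different route. The paper isolates a $2$-dimensional middle block $N=[-a_q,a_q]\times[-a_{q+1},a_{q+1}]$, reduces to $\int_N|t|^{-1}\,dt$, and computes that planar integral directly (this is the source of the logarithm there), then produces the lower bound by combining two separate sub-box restrictions. You instead pass to the comparable $\ell^\infty$ norm $M(x)=\max_j|x_j|$ and apply the layer-cake identity
\[
I(R)=\int_R M(x)^{q-n}\,dx=(n-q)\int_0^\infty t^{q-n-1}\operatorname{vol}\big(R\cap\{M\le t\}\big)\,dt
=(n-q)2^n\sum_{j=0}^n a_1\cdots a_j\int_{a_j}^{a_{j+1}} t^{q-j-1}\,dt,
\]
which makes the log transparent as the $j=q$ summand. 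This is clean and, for the upper bound, fully works: for $j<q$ one has $a_{j+1}^{q-j}\le a_{j+1}\cdots a_q$, for $q<j\le n$ one has $a_1\cdots a_{j-1}a_j^{q-j+1}\le a_1\cdots a_q$, so every $j\ne q$ term is at most $\frac{1}{|q-j|}a_1\cdots a_q$ and the sum is $\le C_n a_1\cdots a_q$.

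The gap is in the lower bound for the integer case, and it is a real one, not just bookkeeping. Your claim that the $j=q+1$ term (or $j=n$) always contributes $\gtrsim a_1\cdots a_q$ is false. The $j=q+1$ summand evaluates to $a_1\cdots a_q\big(1-\frac{a_{q+1}}{a_{q+2}}\big)$, which is zero whenever $a_{q+1}=a_{q+2}$; similarly, $a_1\cdots a_j\int_{a_j}^{a_{j+1}}t^{q-j-1}\,dt$ vanishes for every $1\le j\le n-1$ if all $a_i$ are equal, and the $j=n$ term $\frac{1}{n-q}a_1\cdots a_{n-1}a_n^{q-n+1}$ is negligible relative to $a_1\cdots a_q$ once $a_n$ is large compared to $a_{n-1}$ when $q<n-1$. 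So no single summand can be relied upon uniformly. The fix is easy and stays within your framework: bound the integral below directly on $R\cap\{M\le a_q\}$, where $M^{q-n}\ge a_q^{q-n}$ and $\operatorname{vol}(R\cap\{M\le a_q\})=2^na_1\cdots a_q\,a_q^{n-q}$, yielding a contribution $\ge 2^n a_1\cdots a_q$; together with the $j=q$ summand this gives the required $\gtrsim a_1\cdots a_q\big(1+\log\frac{a_{q+1}}{a_q}\big)$. (Equivalently, one can show by telescoping that the partial sum $\sum_{j=0}^{q-1}$ of your expansion is $\ge\frac{1}{q}a_1\cdots a_q$.) With that correction, your integer-case argument is a valid and somewhat slicker alternative to the paper's.
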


\begin{proof}
Let $R=D\times N \times J$, where $D=[-a_1, a_1]\times \cdots \times [-a_i,a_i]$,
$N=[-a_{i+1}, a_{i+1}]$, and $J=[-a_{i+2}, a_{i+2}]\times \cdots \times [-a_n,a_n]$.
Note that $D$ or $J$ does not appear if $i=0$ or $i=n-1$.

Let $\alpha_k$ be the surface area of the unit sphere in $\R^k$.
A simple calculation yields the following if $t>0$ and $k<p$,
\begin{equation}\label{jb}
\int_{z\in \R^k} (t+|z|)^{-p} \, dz = \frac{\alpha_k}{p-k} t^{k-p}.
\end{equation}

Let $x=(y,t,z)\in D\times N \times J$. Then $|x| \ge \frac12(|t|+|z|)$.
For $i<q<i+1, i=0, \ldots, n-2$, by \eqref{jb},
\begin{align*}
\tilde V_q(R) &= \frac qn \int_R |x|^{q-n} \, dx \le \frac qn 2^{n-q} V_i(D) \int_{N\times J} (|t|+|z|)^{q-n} dt dz \\
&\le \frac{q2^{n-q}\alpha_{n-i-1}}{n(i+1-q)} V_i(D) \int_{t\in N} |t|^{q-i-1}\, dt \\
&=\frac{q2^{n-q+1}\alpha_{n-i-1}}{n(i+1-q)(q-i)} V_i(D) a_{i+1}^{q-i}.
\end{align*}

When $n-1<q <n$,
\[
\tilde V_q(R) \le \frac qn \int_{D\times N} |t|^{q-n} \, dydt = \frac{q2^{n-q+1}}{n(q-n+1)} V_{n-1}(D) a_n^{q-n+1}.
\]

When $q\ge n$,
\begin{align*}
\tilde V_q(R) &\le \frac qn n^\frac{q-n}2 \int_{|x_i|\le a_i} (|x_1|^{q-n}+\cdots |x_n|^{q-n}) dx_1\cdots dx_n \\
&=\frac{2qn^{\frac{q-n}2-1}}{q-n+1} (a_1^{q-n+1}a_2\cdots a_n + \cdots a_1\cdots a_{n-1}a_n^{q-n+1}) \\
&\le \frac{2qn^{\frac{q-n}2}}{q-n+1}a_1\cdots a_{n-1}a_n^{q-n+1}.
\end{align*}

When $q=1, \ldots, n-1$, let $R=D\times N \times J$, where $D=[-a_1, a_1]\times \cdots \times [-a_{q-1},a_{q-1}]$,
$N=[-a_q, a_q]\times [-a_{q+1}, a_{q+1}]$, and $J=[-a_{q+2}, a_{q+2}]\times \cdots \times [-a_n,a_n]$.
Note that $D$ or $J$ does not appear if $i=1$ or $i=n-1$.
As shown above,  by \eqref{jb},
\begin{align*}
\tilde V_q(R) \le \frac{q2^{n-q}\alpha_{n-q-1}}{n} V_{q-1}(D) \int_{t\in N} |t|^{-1}\, dt,
\end{align*}
where $\alpha_0=1$ when $q=n-1$. Let $t=(t_1, t_2)$. Then
\begin{align*}
 \frac14\int_{t\in N} |t|^{-1}\, dt &\le \int_0^{a_{q+1}}\int_0^{a_q} \frac1{t_1+t_2}\, dt_1 dt_2 \\
 &=a_q\log\big(1+\frac{a_{q+1}}{a_q}\big) + a_{q+1}\log\big(1+\frac{a_{q}}{a_{q+1}}\big) \\
 &\le a_q\log\big(1+\frac{a_{q+1}}{a_q}\big) + a_q
   \le 2 a_q\big(1 + \log\frac{a_{q+1}}{a_q}\big).
\end{align*}
Thus,
\[
\tilde V_q(R) \le \frac{q2^{n-q+3}\alpha_{n-q-1}}{n} V_{q-1}(D)  a_q\big(1 + \log\frac{a_{q+1}}{a_q}\big).
\]

The reverse inequality is easier to show. Let $q>0$, $i=0, 1, \ldots, n-1$, and
\[R'=\{x=(x_1,\ldots,x_n) : |x_j|\le a_j \text{ if } j\le i, \frac12 a_{i+1} \le |x_j| \le a_{i+1}
\text{ if } j>i\}.\]
 Then for $x\in R'$,
$\frac12 a_{i+1} \le |x| \le n^\frac12 a_{i+1}$. Thus,
\[
\tilde V_q(R) \ge \tilde V_q(R') \ge \frac qn c^{q-n} a_{i+1}^{q-n} V_n(R')
=\frac qn c^{q-n} 2^i a_1\cdots a_i a_{i+1}^{q-i},
\]
where $c=n^\frac12$ if $q\le n$ and $c=\frac12$ if $q>n$. When $q=i$, it gives that
\begin{equation}\label{r1}
\tilde V_i(R) \ge \frac in i^{i-n} 2^i a_1\cdots a_i, \ \ i=1, \ldots, n-1.
\end{equation}
Let
\[
R'=\{x=(y,z)\in \R^i\times \R^{n-i} : |y_j|\le a_j \text{ if } j=1,\ldots, i, a_i\le |z|\le a_{i+1}\}.
\]
Then $|x|\le i^\frac12 (a_i + |z|)$. Thus,
\begin{align}
\tilde V_i(R) &\ge \tilde V_i(R') \ge \frac{i^{\frac{i-n}2+1}}{n}
 a_1\cdots a_i \int_{a_i\le |z|\le a_{i+1}} (a_i + |z|)^{i-n}\, dz \nonumber \\
&= \frac{i^{\frac{i-n}2+1}}{n} \alpha_{n-i} a_1\cdots a_i
\int_1^\frac{a_{i+1}}{a_i} \big(\frac1t+1\big)^{i-n} \frac1t \, dt \nonumber\\
&\ge \frac{i^{\frac{i-n}2+1}}{n} \alpha_{n-i} 2^{i-n} a_1\cdots a_i
\int_1^\frac{a_{i+1}}{a_i}\frac1t \, dt \nonumber\\
&=\frac{i^{\frac{i-n}2+1}}{n} \alpha_{n-i} 2^{i-n} a_1\cdots a_i  \log \frac{a_{i+1}}{a_i}. \label{r2}
\end{align}
Combining \eqref{r1} and \eqref{r2} proves the case of $q=1, \ldots, n-1$.
\end{proof}

In the rest of this section, we discuss some consequences of Lemma~\ref{qthIntrinsicBox}.
Recall that for a convex body $K\in\mathcal{K}^n_{o}$, its polar (cf. Schneider \cite{Sch14}) is the convex body $K^*\in\mathcal{K}^n_{o}$ defined as
$$
K^*=\{x\in\R^n:\,\langle x,y\rangle\leq 1\,\;\forall y\in K\}.
$$
It satisfies $(K^*)^*=K$, $(\lambda K)^*=\lambda^{-1}K^*$ for $\lambda>0$. Then if $E$ is a centroid, $E^*$ is a centered ellipsoid as well, and if $K\subset C$ for $K,C\in\mathcal{K}^n_{o}$, then $C^*\subset K^*$. In addition,
if $v\in S^{n-1}$, then
\begin{equation}
\label{SupportRadialPolar}
\rho\lsub{K^*}(v)=h_K(v)^{-1}.
\end{equation}
For example, if $\Gamma=\sum_{i=1}^n[-a_i,a_i]e_i$ for $a_1,\ldots,a_n>0$ and
orthonormal basis $e_1,\ldots,e_n$ of $\R^n$, then
\begin{equation}
\label{BoxPolar}
\Gamma^*={\rm conv}\left\{\frac{\pm e_1}{a_1},\ldots,\frac{\pm e_n}{a_n}\right\}\supset
\sum_{i=1}^n\left[\frac{-1}{na_i},\frac{1}{na_i}\right]e_i.
\end{equation}

One of the most well-known inequalities involving the polar body is the Blaschke-Santal\'o inequality,
\begin{equation}
\label{BScentered}
V(K)\cdot V(K^*)\leq \kappa_n^2
\end{equation}
for any centered convex body $K$ in $\R^n$  (with centroid at the origin). We note that $K$ is centered if and only if the so-called Santal\'o point
of $K^*$ is the origin, where the Santal\'o point $z_C\in {\rm int}\,C$  of a $C\in\mathcal{K}^n_{o}$ is the unique point such that
$V((C-z_C)^*)\leq V((C-z)^*)$ for any $z\in{\rm int}\,C$. Naturally, if $z_K=o$ for the  Santal\'o point of $K$, then \eqref{BScentered} holds, as well.

On the other hand, Kuperberg \cite{Kup08} proved the following Reverse Blaschke-Santal\'o inequality (see the arXiv version for the case of non-symmetric convex bodies): If $K\in\mathcal{K}^n_{o}$, then
\begin{equation}
\label{RBSKuperbergeq}
V(K)\cdot V(K^*)> \frac{\kappa_n^2}{4^n},
\end{equation}
and furthermore $V(K)\cdot V(K^*)>\frac{\pi^n}{n!}> \frac{\kappa_n^2}{2^n}$ provided $K=-K$.

If $K\in\mathcal{K}^n_{o}$ is centered, or its Santal\'o point is the origin (and hence $K^*$ is centered), then Kannan-Lov\'asz-Simonovits \cite{KLS95} proved the existence of a
 centered ellipsoid $E\subset\R^n$ such that
\begin{equation}
\label{EellipsoidnE}
E\subset K\subset nE.
\end{equation}
In particular, if $e_1,\ldots,e_n$ form the
orthonormal basis  of $\R^n$ corresponding to the principal directions of $E$,  and
$a_1,\ldots,a_n>0$ are the corresponding half axes of $E$, then
the rectangular box
$\Gamma=\sum_{i=1}^n[-a_1,a_1]e_i$ satisfies
\begin{equation}
\label{RboxBigR}
 n^{-1}\Gamma\subset K\subset n\Gamma,
\end{equation}
and hence  $\widetilde{\Gamma}=\sum_{i=1}^n\left[\frac{-1}{a_i},\frac{1}{a_i}\right]e_i$ satisfies
(cf. \eqref{BoxPolar})
\begin{equation}
\label{RboxPolarBigR}
n^{-2}\widetilde{\Gamma}\subset K^*\subset n\widetilde{\Gamma}.
\end{equation}
The first consequence of Lemma~\ref{qthIntrinsicBox} is a simple estimate on the inradius, which follows from \eqref{RboxBigR}, Lemma~\ref{qthIntrinsicBox}, and the $q$-homogeneity of the $q$-th dual quermassintegral.

\begin{lemma}
\label{inradiusVq}
For $q>0$, $R,c>1$, and $n\geq 2$, there exists $\xi=\xi(n,q,R,c)>0$ such that if $K\subset RB^n$ is a centered convex body,
$Q\in\mathcal{S}_o^n$ with $c^{-1}B^n\subset Q\subset cB^n$,   and $\widetilde{V}_q(K,Q)\geq t$ for $t>0$, then  $\xi  t^{\frac1q}B^n\subset K$.
\end{lemma}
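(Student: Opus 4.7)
The plan is to apply the Kannan--Lov\'asz--Simonovits box approximation~\eqref{RboxBigR} to the centered body $K$, bound $\widetilde{V}_q(K,Q)$ by the $q$-th dual quermassintegral of the approximating box via monotonicity together with the sandwich $c^{-1}\le\rho_Q\le c$, invoke Lemma~\ref{qthIntrinsicBox} to extract a lower bound on the smallest half-axis $a_1$, and then conclude from the inscribed ball inclusion $(a_1/n)B^n\subset K$.

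Concretely, \eqref{RboxBigR} supplies an orthonormal basis $e_1,\ldots,e_n$ and half-axes $0<a_1\le\cdots\le a_n$ such that $\Gamma=\sum_{i=1}^n[-a_i,a_i]e_i$ satisfies $n^{-1}\Gamma\subset K\subset n\Gamma$; since $n^{-1}\Gamma\subset RB^n$, one reads off $a_i\le nR$. Combining monotonicity of $\widetilde{V}_q(\cdot,Q)$ in its first argument, the inequality $\widetilde{V}_q(K',Q)\le c^{|n-q|}\widetilde{V}_q(K')$ coming from $c^{-1}\le\rho_Q\le c$, and the $q$-homogeneity $\widetilde{V}_q(n\Gamma)=n^q\widetilde{V}_q(\Gamma)$, one obtains
\[
t\;\le\;\widetilde{V}_q(K,Q)\;\le\;c^{|n-q|}\,n^q\,\widetilde{V}_q(\Gamma).
\]
Lemma~\ref{qthIntrinsicBox} then bounds $\widetilde{V}_q(\Gamma)$ by a monomial in the $a_i$'s of total degree $q$ (with a logarithmic factor in the integer cases $q\in\{1,\ldots,n-1\}$); in every case the smallest axis $a_1$ enters with exponent $\min(q,1)$. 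Using $a_j\le nR$ for $j\ge 2$ to absorb the remaining axes into the constant and then inverting in $a_1$ yields $a_1\ge\xi'(n,q,R,c)\,t^{1/q}$, and the inclusion $(a_1/n)B^n\subset K$ gives $\xi\,t^{1/q}B^n\subset K$ with $\xi=\xi'/n$.

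The main obstacle is the range $q>1$, where the exponent of $a_1$ coming from Lemma~\ref{qthIntrinsicBox} is only $1$ rather than $q$: the naive inversion yields the linear estimate $a_1\gtrsim t/R^{q-1}$, and passing to the target power $a_1\gtrsim t^{1/q}$ requires a second use of the $q$-homogeneity of $\widetilde{V}_q(\cdot,Q)$ together with the diameter cap $K\subset RB^n$, which is also the step that forces the dependence of $\xi$ on $R$. In the integer cases the logarithmic factor $1+\log(a_{q+1}/a_q)$ is absorbed into the constant by a standard soft argument exploiting $a_{q+1}\le nR$.
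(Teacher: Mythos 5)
Your plan — pass to the Kannan--Lov\'asz--Simonovits box $\Gamma$ via \eqref{RboxBigR}, control $\widetilde{V}_q(K,Q)$ through $\widetilde{V}_q(\Gamma)$ by monotonicity, $q$-homogeneity and $c^{-1}\le\rho_Q\le c$, invert Chen's box estimate (Lemma~\ref{qthIntrinsicBox}) to bound $a_1$ from below, and finish with $(a_1/n)B^n\subset K$ — is the paper's intended route. For $0<q<1$ it closes cleanly, since the relevant monomial is $a_1^q$ (and then $\xi$ needs no dependence on $R$).

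The genuine gap is the step you flag as ``the main obstacle'' and then wave past. For $q>1$ the monomial carries $a_1$ only to the first power, so inversion yields a \emph{linear}-in-$t$ lower bound for $a_1$, and your proposed remedy — a ``second use of the $q$-homogeneity together with the diameter cap'' — cannot upgrade this to $a_1\gtrsim t^{1/q}$, because that power is actually false for $q>1$ and small $t$. Take $Q=B^n$ and the thin box $K=[-\epsilon,\epsilon]\times[-a,a]^{n-1}\subset RB^n$ with $a=R/\sqrt n$ fixed and $\epsilon\to 0$: by Lemma~\ref{qthIntrinsicBox}, $\widetilde{V}_q(K,Q)\asymp\epsilon\,a^{q-1}$ for $q>1$, while the inradius of $K$ is $\epsilon$, and
\[
\frac{\epsilon}{\big(\epsilon\,a^{q-1}\big)^{1/q}}=\Big(\frac{\epsilon}{a}\Big)^{(q-1)/q}\longrightarrow 0,
\]
so no $\xi=\xi(n,q,R,c)>0$ can give $\xi\,\widetilde{V}_q(K,Q)^{1/q}B^n\subset K$. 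Rescaling via $q$-homogeneity does not help: replacing $(K,R)$ by $(K/R,1)$ just rescales the admissible $t$ by $R^{-q}$, and the bad regime of small $t$ persists. What your argument actually proves for $q\ge1$ is a linear-type inradius bound, and that is all the paper uses: Lemma~\ref{inradiusVq} is invoked only in the proof of Lemma~\ref{compactness} with $\widetilde{V}_q(C_m,Q)=1$, i.e.\ $t=1$, where any fixed positive inradius bound suffices. Either prove the weaker linear-type statement, or restrict $t$ to a fixed interval and let $\xi$ depend on its endpoints.
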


For $q>0$, $q^*$ is defined by \eqref{q*}. Otherwise, $r\in (0, q^*]$ if and only if
\begin{equation}
\label{q*rsup}
\frac{n-1}q+\frac1r\geq 1\mbox{ \ and \ }\frac{n-1}r+\frac1q\geq 1;
\end{equation}
that is, $q^*$ is the supremum of all $r>0$ satisfying \eqref{q*rsup}.

The following Blaschke-Santal\'o-type inequality for dual mixed volumes was shown
by Chen \cite{HaodiChen} as a consequence of Lemma~\ref{qthIntrinsicBox}.

\begin{theo}[H. Chen]
\label{qthIntrinsicBS}
For $n\geq 2$, $c>1$, $Q_1,Q_2\in\mathcal{S}_o^n$ with $c^{-1}B^n\subset Q_1,Q_2\subset cB^n$, and  $q,r>0$ such that $r\leq q^*$ (or equivalently, $q\leq r^*$), there exists $\theta(n,q,r,c)>1$ depending on $n,q,r,c$ such that if $K\subset\R^n$  is a centered convex body, then
\begin{equation}\label{BSi}
\theta(n,q,r,c)^{-1}\leq \widetilde{V}_q(K,Q_1)^{\frac1q}\cdot \widetilde{V}_r(K^*,Q_2)^{\frac1r}\leq \theta(n,q,r,c).
\end{equation}
\end{theo}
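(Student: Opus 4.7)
The plan is to reduce to rectangular boxes via the Kannan--Lov\'asz--Simonovits positioning and then invoke Lemma~\ref{qthIntrinsicBox} on both factors. Since $K$ is centered, \eqref{EellipsoidnE} yields a centered ellipsoid $E$ with $E\subset K\subset nE$; after an orthogonal change of coordinates---which preserves $c^{-1}B^n\subset Q_j\subset cB^n$ for $j=1,2$---one may arrange that the principal directions of $E$ coincide with $e_1,\dots,e_n$, with corresponding half-axes $0<a_1\le\cdots\le a_n$. Setting $\Gamma=\sum_{i=1}^n[-a_i,a_i]e_i$ and $\widetilde\Gamma=\sum_{i=1}^n[-1/a_i,1/a_i]e_i$, the inclusions \eqref{RboxBigR} and \eqref{RboxPolarBigR} then give $n^{-1}\Gamma\subset K\subset n\Gamma$ and $n^{-2}\widetilde\Gamma\subset K^*\subset n\widetilde\Gamma$.

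Next, I exploit the monotonicity and $q$-homogeneity of $\widetilde V_q$ in each argument, together with the pinching $c^{-1}\le\rho_{Q_j}\le c$ on $S^{n-1}$, to conclude
\[
\widetilde V_q(K,Q_1)\asymp \widetilde V_q(\Gamma,B^n),\qquad \widetilde V_r(K^*,Q_2)\asymp \widetilde V_r(\widetilde\Gamma,B^n),
\]
with implicit constants depending only on $n,q,r,c$. It therefore suffices to prove
\[
\widetilde V_q(\Gamma,B^n)^{1/q}\,\widetilde V_r(\widetilde\Gamma,B^n)^{1/r}\asymp 1
\]
uniformly in $0<a_1\le\cdots\le a_n$, with constants depending only on $n,q,r$.

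Finally, I apply Lemma~\ref{qthIntrinsicBox} to $\Gamma$ (ordered half-axes $a_1\le\cdots\le a_n$) and to $\widetilde\Gamma$ (ordered half-axes $1/a_n\le\cdots\le 1/a_1$), so that both dual quermassintegrals become explicit monomials in the $a_i$'s, with possible logarithmic corrections when $q$ or $r$ lies in $\{1,\dots,n-1\}$. The main obstacle is the resulting case analysis in the positions of $\lfloor q\rfloor$ and $\lfloor r\rfloor$ relative to $n-1$: in each case one must verify that the signed exponent of every $a_i$ in the product admits a uniform two-sided bound. The hypothesis $r\le q^*$, equivalently the system $\frac{n-1}{q}+\frac1r\ge 1$ and $\frac{n-1}{r}+\frac1q\ge 1$ from \eqref{q*rsup}, is exactly what is required to balance these exponents in a way compatible with the ordering $a_1\le\cdots\le a_n$; the logarithmic corrections at integer values of $q$ and $r$ are absorbed into the final constant $\theta(n,q,r,c)$.
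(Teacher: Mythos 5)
Your overall plan --- Kannan--Lov\'asz--Simonovits positioning, reduction to a box $\Gamma$ and its reciprocal box $\widetilde\Gamma$, and then Lemma~\ref{qthIntrinsicBox} applied to both factors --- is exactly the route the paper indicates (the paper itself gives no proof of Theorem~\ref{qthIntrinsicBS}, only a citation to \cite{HaodiChen}, with the surrounding material \eqref{RboxBigR}--\eqref{RboxPolarBigR} set up for this purpose). The reduction to boxes and the use of $c^{-1}\le\rho_{Q_j}\le c$ are fine.

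The gap is precisely at the step you wave at: you assert that $r\le q^*$ balances the exponents so as to give a \emph{two-sided} bound on the box product, but you never verify it, and it is not true. Writing $a_i=e^{t_i}$ with $t_1\le\cdots\le t_n$, the box product has the form $\exp\bigl(\sum_i e_i t_i\bigr)$, with $\sum_i e_i=0$ by scale invariance; a uniform two-sided bound on the cone $\{t_1\le\cdots\le t_n\}$ forces every partial sum $\sum_{i\le k}e_i$ to be simultaneously $\ge 0$ and $\le 0$, which happens only when $q=r=n$. The hypothesis $r\le q^*$, via \eqref{q*rsup} and the linearity of $\ell\mapsto \tfrac{\ell}{q}+\tfrac{n-\ell}{r}$, is equivalent to $\sum_{i\le k}e_i\ge 0$ for all $k<n$, which yields only the \emph{upper} bound in \eqref{BSi}. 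For the lower bound one needs the reverse inequalities, which are incompatible with $r\le q^*$ unless $q=r=n$. Concretely, take $n=3$, $q=r=5/2$ (so $q^*=10/3\ge r$), and $K$ the centered box with half-axes $(\varepsilon,1,1/\varepsilon)$. Lemma~\ref{qthIntrinsicBox} gives $\widetilde V_{5/2}(K)\asymp\varepsilon^{1/2}$, and $K^*$ is sandwiched between multiples of the box with the same sorted half-axes $(\varepsilon,1,1/\varepsilon)$, so $\widetilde V_{5/2}(K^*)\asymp\varepsilon^{1/2}$ as well; hence $\widetilde V_{5/2}(K)^{2/5}\,\widetilde V_{5/2}(K^*)^{2/5}\asymp\varepsilon^{2/5}\to 0$, violating the left-hand inequality of \eqref{BSi}. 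So your plan proves the right-hand inequality of \eqref{BSi} but cannot prove the left-hand one, and in fact the left-hand inequality fails under the stated hypothesis. (This does not affect the paper's results: only the upper bound of \eqref{qthIntrinsicBS-eq} is invoked, in the estimate \eqref{mEpsilonInSigma} inside the proof of Lemma~\ref{compactness}.) You should carry out the exponent bookkeeping explicitly and restrict the conclusion to the upper bound.
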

\noindent{\bf Remark.} If $Q_2=B^n$, then \eqref{SupportRadialPolar} and \eqref{BSi} yield
\begin{equation}
\label{qthIntrinsicBS-eq}
\theta(n,q,r,c)^{-1}\leq \widetilde{V}_q(K,Q_1)^{\frac1q}\cdot \left(\frac1n\int_{S^{n-1}}h_K(u)^{-r}\,du\right)^{\frac1r} \leq \theta(n,q,r,c).
\end{equation}

\section{Existence of Solutions to the $L_p$ Dual Minkowski Problem}

First, we verify a simple statement about invariant measures.

\begin{lemma}
\label{Invariant-Measure}
Let $G$ be a closed subgroup of $O(n)$, $n\geq 2$. If $\mu_1$ and $\mu_2$
are $G$-invariant finite Borel measures on $S^{n-1}$, then $\mu_1=\mu_2$ if and only if $\int_{S^{n-1}}\varphi\,d\mu_1=\int_{S^{n-1}}\varphi\,d\mu_2$ for any $G$-invariant
continuous function $\varphi:S^{n-1}\to\R$.
\end{lemma}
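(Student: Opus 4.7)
The forward implication is immediate, so the plan is to establish the converse: if $\int\varphi\,d\mu_1=\int\varphi\,d\mu_2$ for every $G$-invariant continuous $\varphi$, then $\mu_1=\mu_2$. The natural way to bootstrap from the $G$-invariant test functions to \emph{all} continuous test functions is to average an arbitrary continuous function against the Haar measure on $G$.

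First I would record that $G$, being a closed subgroup of the compact group $O(n)$, is itself compact and therefore carries a (bi-invariant) Haar probability measure $\lambda$. Given any continuous $\psi:S^{n-1}\to\R$, define
\[
\tilde\psi(x)=\int_G \psi(gx)\,d\lambda(g),\qquad x\in S^{n-1}.
\]
Two routine checks are needed: $\tilde\psi$ is $G$-invariant (by left-invariance of $\lambda$: $\tilde\psi(hx)=\int_G\psi(ghx)\,d\lambda(g)=\int_G\psi(gx)\,d\lambda(g)$), and $\tilde\psi$ is continuous on $S^{n-1}$ (since $\psi$ is uniformly continuous on the compact set $S^{n-1}$ and $G$ is compact, one passes the continuity of $\psi$ through the integral by dominated convergence).

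Next I would use Fubini's theorem together with the $G$-invariance of each $\mu_i$ to show $\int_{S^{n-1}}\tilde\psi\,d\mu_i=\int_{S^{n-1}}\psi\,d\mu_i$ for $i=1,2$. Explicitly,
\[
\int_{S^{n-1}}\tilde\psi(x)\,d\mu_i(x)=\int_G\!\!\int_{S^{n-1}}\psi(gx)\,d\mu_i(x)\,d\lambda(g)=\int_G\!\!\int_{S^{n-1}}\psi(y)\,d\mu_i(y)\,d\lambda(g)=\int_{S^{n-1}}\psi\,d\mu_i,
\]
where the middle equality uses that the pushforward of $\mu_i$ under $x\mapsto gx$ equals $\mu_i$ for each $g\in G$, and the last uses that $\lambda$ is a probability measure. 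Applying the hypothesis to the $G$-invariant continuous function $\tilde\psi$ now yields $\int_{S^{n-1}}\psi\,d\mu_1=\int_{S^{n-1}}\psi\,d\mu_2$ for every continuous $\psi:S^{n-1}\to\R$. Since $S^{n-1}$ is a compact metric space, the Riesz representation theorem (equivalently, regularity of finite Borel measures together with Urysohn's lemma) forces $\mu_1=\mu_2$, completing the proof.

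There is no real obstacle here; the only substantive point is to know that a closed subgroup of $O(n)$ carries a Haar probability measure so that the averaging operator $\psi\mapsto\tilde\psi$ is well defined and maps $C(S^{n-1})$ into its $G$-invariant subspace. The argument is, in essence, a standard application of the Weyl unitary trick in the setting of finite Borel measures on a compact homogeneous space.
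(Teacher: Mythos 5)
Your proposal is correct and follows essentially the same approach as the paper: averaging an arbitrary continuous $\psi$ over the Haar probability measure of the compact group $G$ to produce a $G$-invariant test function, then applying Fubini and the $G$-invariance of each $\mu_i$ to conclude. The only cosmetic difference is that you spell out the final step (uniqueness via the Riesz representation theorem) and the continuity of the average a bit more explicitly than the paper does.
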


\begin{proof}
The only if direction is obvious. For the if direction, we assume that
\begin{equation}\label{inv-meas-cond}
\int_{S^{n-1}}\varphi\,d\mu_1=\int_{S^{n-1}}\varphi\,d\mu_2
\end{equation}
holds for any $G$-invariant continuous function $\varphi:S^{n-1}\to\R$,
and we want to show that $\mu_1=\mu_2$.
Or equivalently, we shall show that
\begin{equation}
\label{invariant-measure-continuous}
\int_{S^{n-1}}\psi\,d\mu_1=\int_{S^{n-1}}\psi\,d\mu_2
\end{equation}
 for any continuous function $\psi:S^{n-1}\to\R$ (without assuming $G$-invariance). Since $G$ is a compact group, there exists a $G$-invariant probability measure (the Haar measure) $\nu$ on $G$.
 Let
 $$
 \varphi(u)=\int_G\psi(gu)\,d\nu(g), \ \ u\in S^{n-1}.
 $$
 It follows that $\varphi$ is a $G$-invariant continuous function. By Fubini's theorem, a change of variables, and the $G$-invariance of $\mu_i$, we have that
 \[
\begin{aligned}\int_{S^{n-1}}\varphi\text{ }d\mu_{i} & =\int_{S^{n-1}}\int_{G}\psi(gu)\text{ }d\nu(g)d\mu_{i}(u)\\
 & =\int_{G}\int_{S^{n-1}}\psi(gu)\text{ }d\mu_{i}(u)d\nu(g)\\
 & =\int_{G}\int_{S^{n-1}}\psi(v)\text{ }d\mu_{i}(g^{-1}v)d\nu(g)\\
 & =\int_{G}\int_{S^{n-1}}\psi(v)\text{ }d\mu_{i}(v)d\nu(g)\\
 & =\int_{S^{n-1}}\psi(v)\text{ }d\mu_{i}(v).
\end{aligned}
\]
By this and \eqref{inv-meas-cond}, we conclude \eqref{invariant-measure-continuous}.
\end{proof}

For $C\in\mathcal{K}_o^n$, we consider the functional
$$
\Phi(C)=
 \frac1p\log\int_{S^{n-1}}h_{C}^p\,d\mu-\frac1q\log\widetilde{V}_q(C,Q),
$$
which is scale-invariant. Namely, it satisfies
\begin{equation}
\label{entropy-rescale-invariant}
\Phi(\lambda\,C)=\Phi(C) \mbox{ \ for $\lambda>0$}.
\end{equation}
In addition, the functional is continuous; namely, Lemma~\ref{CpqcontQ} yields that if $C_m\in\mathcal{K}_o^n$ tends to $C\in\mathcal{K}_o^n$, then
\begin{equation}
\label{entropy-continuous}
\lim_{m\to\infty}\Phi(C_m)=\Phi(C).
\end{equation}

Let $\mathcal{C}$ be the set of $G$-invariant convex bodies $C\subset\R^n$ with $\widetilde{V}_q(C,Q)=1$.
 Any $C\in\mathcal{C}$ is centered, i.e. the centroid of $C$ is at the origin since $C$ is $G$-invariant, $G$ has no non-zero fixed point, and the centroid is equivariant with respect to linear transformations. Thus, for $C\in\mathcal{C}$, we have
$$
\Phi(C)=
 \frac1p\log\int_{S^{n-1}}h_{C}^p\,d\mu.
$$

The following lemma gives the existence of solutions to the minimization problem of $\Phi(C)$.

\begin{lemma}\label{compactness}
Let $q>0$, $-q^*<p<0$, $G$ a closed subgroup of $O(n)$
without a non-zero fixed point, and
$Q$ a $G$-invariant star body in $\rn$.
Let $\mu$ be a non-trivial, $G$-invariant, finite Borel
measure on $\sn$ with a density function $f\in L^s(\sn)$, where
$s>1$ if $q\leq 1$, and $s=\frac{1}{1+p/q^*}>1$ if $q>1$. Then
\begin{itemize}
\ir{1} If $C_m\in\mathcal{C}$ satisfies  $\lim_{m\to\infty}{\rm diam}(C_m)=\infty$, then
\begin{equation}\label{Cmdiaminfinity}
\lim_{m\to\infty}\Phi(C_m)=\infty.
\end{equation}

\ir{2} There exists $\widetilde{K}\in\mathcal{C}$, such that
\begin{equation}
\label{tildeCminimizes}
\Phi(\widetilde{K})=\min\{\Phi(C):\,C\in\mathcal{C}\}.
\end{equation}
\end{itemize}
\end{lemma}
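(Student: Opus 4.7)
Since $p<0$, the claim \eqref{Cmdiaminfinity} is equivalent to $\int_{\sn}h_{C_m}^{-|p|}f\,du\to 0$. My plan is to combine a pointwise divergence of $h_{C_m}$ on almost every ray with a uniform integrability estimate coming from the Blaschke-Santal\'o inequality of Theorem~\ref{qthIntrinsicBS}, and then conclude by Vitali's convergence theorem; part (2) will follow by Blaschke selection and the continuity of $\Phi$.

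For the pointwise step, observe that every $C\in\mathcal{C}$ is $G$-invariant, hence centered, so the Kannan-Lov\'asz-Simonovits inclusion \eqref{EellipsoidnE} supplies a centered ellipsoid $E_m$ with $E_m\subset C_m\subset nE_m$; write $a_1^{(m)}\le\cdots\le a_n^{(m)}$ for its half-axes along orthonormal principal directions $e_1^{(m)},\ldots,e_n^{(m)}$. Since $\mathrm{diam}(C_m)$ is comparable to $a_n^{(m)}$, any subsequence still has $a_n^{(m)}\to\infty$, and by extracting a further subsequence we may assume $a_i^{(m)}\to a_i\in[0,\infty]$ and $e_i^{(m)}\to e_i$, with the index set $I_\infty=\{i:a_i=\infty\}$ containing $n$. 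The bound
\begin{equation*}
h_{C_m}(u)\ge h_{E_m}(u)\ge \Bigl(\min_{i\in I_\infty}a_i^{(m)}\Bigr)\Bigl(\textstyle\sum_{i\in I_\infty}\langle u,e_i^{(m)}\rangle^2\Bigr)^{1/2}
\end{equation*}
then forces $h_{C_m}(u)\to\infty$ for every $u\in\sn$ not orthogonal to $\spane\{e_i:i\in I_\infty\}$, and the exceptional set is a lower-dimensional great subsphere of $\mathcal{H}^{n-1}$-measure zero.

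For uniform integrability, set $s'=s/(s-1)$, so that $|p|s'=q^*$ when $q>1$, while for $q\le 1$ any finite $s'$ works since $q^*=\infty$. H\"older's inequality yields, for any Borel $A\subset\sn$,
\begin{equation*}
\int_A h_{C_m}^{-|p|}f\,du\le\Bigl(\int_A f^{s}\,du\Bigr)^{1/s}\Bigl(\int_{\sn}h_{C_m}^{-|p|s'}\,du\Bigr)^{1/s'}\le C_0\Bigl(\int_A f^{s}\,du\Bigr)^{1/s},
\end{equation*}
where the Blaschke-Santal\'o bound \eqref{qthIntrinsicBS-eq} applied at the exponent $r=|p|s'\le q^*$ controls the second factor uniformly in $m$. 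Because $f^{s}\in L^1(\sn)$, the first factor tends to zero uniformly in $m$ as $|A|\to 0$, so $\{h_{C_m}^{-|p|}f\}$ is uniformly integrable; Vitali's theorem then converts pointwise a.e.\ convergence to $0$ into $L^1$-convergence along the chosen subsequence, and the subsubsequence principle promotes this to the full sequence. I expect the most delicate point to be the critical choice $|p|s'=q^*$: there the Blaschke-Santal\'o bound supplies only an $L^1$ estimate, and it is the absolute continuity of $\int f^s\,du$ as a set function rather than any strict room at the exponent that powers the convergence.

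For part (2), $\mathcal{C}$ is nonempty since a suitably scaled Euclidean ball is $G$-invariant and can be arranged to satisfy $\widetilde{V}_q(\cdot,Q)=1$, hence $\inf_{\mathcal{C}}\Phi<\infty$. For a minimizing sequence $C_m\in\mathcal{C}$ we have $\sup_m\Phi(C_m)<\infty$, so part (1) forces $\mathrm{diam}(C_m)$ to stay bounded. Blaschke's selection theorem supplies a subsequence with $C_m\to\widetilde{K}$ in Hausdorff distance, where $\widetilde{K}$ is compact, convex, and $G$-invariant as a limit of such sets. Since $C_m\subset RB^n$ for a common $R$ and $\widetilde{V}_q(C_m,Q)=1$, Lemma~\ref{inradiusVq} provides a uniform $\xi>0$ with $\xi B^n\subset C_m$, so $\widetilde{K}\in\mathcal{K}^n_o$. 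Lemma~\ref{CpqcontQ} then gives $\widetilde{V}_q(\widetilde{K},Q)=1$, hence $\widetilde{K}\in\mathcal{C}$, while the continuity \eqref{entropy-continuous} yields $\Phi(\widetilde{K})=\lim\Phi(C_m)=\inf_{\mathcal{C}}\Phi$, establishing \eqref{tildeCminimizes}.
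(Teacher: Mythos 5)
Your proof of part (2) follows the paper's argument essentially verbatim (Blaschke selection, Lemma~\ref{inradiusVq} for the uniform inradius, and the continuity statements), so I will focus on part (1), where you take a genuinely different route.

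The paper's proof of \eqref{Cmdiaminfinity} is a two-region split of the sphere: it picks a direction $v_m$ with $R_m v_m\in C_m$, $R_m\to\infty$, forms the ``slab'' $\Sigma_m=\{u:\langle u,v\rangle\le R_m^{-1/2}\ \forall v\in Gv_m\}$, shows $h_{C_m}>R_m^{1/2}$ off $\Sigma_m$ (killing that contribution directly), and controls the contribution on $\Sigma_m$ by H\"older plus Chen's inequality \eqref{qthIntrinsicBS-eq}, using that $\Sigma_m$ shrinks to a set contained in a linear hyperplane because the orbit $Gv_m$ spans $\R^n$ (this is where the ``no nonzero fixed point'' hypothesis enters beyond mere centering). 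Your approach instead extracts a subsequence, uses the Kannan--Lov\'asz--Simonovits inclusion to replace $C_m$ by a centered ellipsoid, deduces $h_{C_m}(u)\to\infty$ for $\mathcal{H}^{n-1}$-a.e.\ $u$, and upgrades this to $L^1$-convergence by Vitali, with uniform integrability supplied by the same H\"older/Chen pairing at the critical exponent $|p|s'=q^*$. Both arguments hinge on exactly the same endpoint estimate $\int_{\sn}h_{C_m}^{-|p|s'}\le n\,\theta^{|p|s'}$; the difference is that the paper localizes the smallness geometrically in a bad region $\Sigma_m$, while you localize it measure-theoretically via uniform integrability. A small conceptual difference: your version uses $G$-invariance only to guarantee centering (needed for KLS and for Chen's inequality), whereas the paper's slab argument uses the ``no fixed point'' hypothesis again to make $\Sigma_m$ shrink to a lower-dimensional set. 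Your route is slightly more robust in that it would work verbatim for any family of centered bodies, at the modest cost of invoking KLS and a subsequence/diagonalization step. Both are correct.

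One small cosmetic remark: in your pointwise-divergence step, it is cleaner to pass immediately to the lower bound $h_{E_m}(u)\ge a_i^{(m)}|\langle u,e_i^{(m)}\rangle|$ for a single fixed $i\in I_\infty$ (say $i=n$); the exceptional set is then the single great subsphere $e_n^\perp\cap\sn$, and you avoid having to track $\min_{i\in I_\infty}a_i^{(m)}$.
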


\begin{proof}
(1) It is equivalent to prove that
\begin{equation}
\label{Cmdiaminfinity0}
\lim_{m\to\infty}\int_{S^{n-1}}h_{C_m}^{p}\,d\mu=0.
\end{equation}
We choose $c>1$ so that $c^{-1}B^n\subset Q\subset cB^n$.
Let $\tilde{q}=q^*$ if $q>1$, and let $\tilde{s}=\frac{\tilde{q}}{\tilde{q}+p}$ in this case; moreover,
let $\tilde{q}>|p|$ be such that $\tilde{s}=\frac{\tilde{q}}{\tilde{q}+p}\leq s$ if $q\leq 1$, and hence
$f\in L^{\tilde{s}}(S^{n-1})$ for any $q>0$.

Let $R_m=\max\{\|x\|:\,x\in C_m\}$. Then by assumption, $\lim_{m\to\infty}R_m=\infty$.
We choose $v_m\in S^{n-1}$ such that $R_mv_m\in C_m$, observe that $R_mv\in C_m$ for $v\in Gv_m$,
and consider the set
$$
\Sigma_m=\left\{u\in S^{n-1}:\,\langle u,v\rangle\leq R_m^{-1/2}\,\;\forall v\in Gv_m\right\}.
$$
To prove \eqref{Cmdiaminfinity0}, it suffices to verify that for any $\varepsilon\in(0,1)$,
if $m$ is large, then we have the following:
\begin{align}
\label{mEpsilonOutSigma}
\int_{S^{n-1}\backslash \Sigma_m}h_{C_m}^{p}\,d\mu&\leq \varepsilon\\
\label{mEpsilonInSigma}
\int_{\Sigma_m}h_{C_m}^{p}\,d\mu&\leq n^{\frac{|p|}{\tilde{q}}}\theta(n,q,\tilde{q},c)^{|p|}\cdot \varepsilon
 \ \  \ \ \mbox{ if $\Sigma_m$ is non-empty}.
\end{align}
For \eqref{mEpsilonOutSigma}, if $u\in S^{n-1}\backslash \Sigma_m$ for large $m$, then there exists $v\in Gv_m$ with
$\langle u,v\rangle> R_m^{-1/2}$, and hence $h_{C_m}(u)\geq \langle u,R_mv\rangle> R_m^{1/2}$. In particular,
$$
\int_{S^{n-1}\backslash \Sigma_m}h_{C_m}^{p}\,d\mu\leq R_m^{-|p|/2}\mu(S^{n-1}),
$$
verifying \eqref{mEpsilonOutSigma}.

Turning to \eqref{mEpsilonInSigma}, for any $w\in S^{n-1}$ and $\delta\in[0,1)$, let
$$
\Xi_{w,\delta}=\left\{x\in\R^n:\,\langle x,v\rangle\leq \delta\,\;\forall v\in Gw\right\}.
$$
We observe that $\Sigma_m=\Xi_{u_m,\delta}\cap S^{n-1}$ for $\delta=R_m^{-1/2}$.
Since $G$ has no non-zero fixed point, if $w\in S^{n-1}$, then $o\in{\rm conv}\,Gw$. Hence, $\Xi_{w,0}$ is contained a linear $(n-1)$-plane. Since $f\in L^{\tilde{s}}(S^{n-1})$, there exists $\tilde{\delta}>0$ such that if $w\in S^{n-1}$, then either
$\Xi_{w,\tilde{\delta}}\cap S^{n-1}=\emptyset$, or
\begin{equation}
\label{mEpsilonInSigmaXi}
 \left(\int_{\Xi_{w,\tilde{\delta}}\cap S^{n-1}}f(u)^{\frac{\tilde{q}}{\tilde{q}+p}}\,du\right)^{\frac{\tilde{q}+p}{\tilde{q}}}\leq \varepsilon.
\end{equation}
Note that $\Sigma_m\subset \Xi_{u_m,\tilde{\delta}}\cap S^{n-1}$ for large $m$. Hence, if
$\Sigma_m\neq\emptyset$, then we deduce from Chen's inequality \eqref{qthIntrinsicBS-eq}, $\widetilde{V}_q(C,Q)=1$,
   H\"older's inequality, and \eqref{mEpsilonInSigmaXi} that
\begin{align*}
 \int_{\Sigma_m}h_C^{p}\,d\mu&=\int_{\Sigma_m}h_C(u)^{-|p|}f(u)\,du\\
&\leq
\left(\int_{\Sigma_m}h_C(u)^{-\tilde{q}}\,du\right)^{\frac{|p|}{\tilde{q}}}
 \left(\int_{\Sigma_m}f(u)^{\frac{\tilde{q}}{\tilde{q}+p}}\,du\right)^{\frac{\tilde{q}+p}{\tilde{q}}}\\
& \leq n^{\frac{|p|}{\tilde{q}}}\theta(n,q,\tilde{q},c)^{|p|}\cdot\varepsilon.
\end{align*}
Therefore, we deduce \eqref{mEpsilonInSigma} as well, and in turn
\eqref{Cmdiaminfinity0} and \eqref{Cmdiaminfinity}.

(2)
Let $C_m\in\mathcal{C}$ be a minimizing sequence such that
$$
\lim_{m\to\infty}\Phi(C_m)=\inf\{\Phi(C):\,C\in\mathcal{C}\}.
$$
It follows from \eqref{Cmdiaminfinity} in Lemma \ref{compactness} that the minimizing sequence is bounded, i.e.,
there exists some finite $R>1$ such that
\begin{equation}
\label{CminRBn}
C_m\subset RB^n\mbox{ \ for all $C_m$}.
\end{equation}
Since $\widetilde{V}_q(C_m,Q)=1$, we deduce from  Lemma~\ref{inradiusVq} and \eqref{CminRBn}
the existence of a $\xi>0$ such that
\begin{equation}
\label{xiBninCm}
\xi\,B^n\subset C_m\mbox{ \ for each $C_m$}.
\end{equation}
Combining the Blaschke Selection Theorem with \eqref{CminRBn} and \eqref{xiBninCm} yields
that there exists a subsequence $\{C_{m'}\}\subset\{C_m\}$ tending to a centered convex body
$\widetilde{K}$. We conclude from
\eqref{entropy-continuous} and the continuity of dual intrinsic volume (cf. Lemma~\ref{CpqcontQ}), that
$\widetilde{K}\in\mathcal{C}$, and hence it satisfies \eqref{tildeCminimizes}.\\
\end{proof}

For reader's convenience, let us recall  Theorem~\ref{pneqqpos-abs-cont} as Theorem~\ref{pneqqpos-abs-cont0}.

\begin{theo}\label{pneqqpos-abs-cont0}
Let $q>0$, $-q^*<p<0$, $G$ a closed subgroup of $O(n)$
without a non-zero fixed point, and
$Q$ a $G$-invariant star body in $\rn$.
Let $\mu$ be a non-trivial, $G$-invariant, finite Borel
measure on $\sn$ with a density function
$f\in L^s(\sn)$, where $s>1$ if $q\leq 1$, and $s=\frac{1}{1+p/q^*}>1$ if $q>1$.
Then there exists a $G$-invariant convex body $K$ in $\rn$ so that it solves
the measure equation $\widetilde{C}_{p,q}(K,Q;\cdot)=\mu$.
\end{theo}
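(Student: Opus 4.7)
The plan is a variational Lagrange multiplier argument; all the compactness and coercivity work has already been absorbed into Lemma~\ref{compactness}. I would take $\widetilde{K}\in\mathcal{C}$ to be the minimizer of $\Phi$ produced by Lemma~\ref{compactness}. Because the centroid of a $G$-invariant body is a $G$-fixed point and $G$ has no non-zero fixed point, the centroid of $\widetilde{K}$ is the origin, hence $0\in\Int\widetilde{K}$ and $h_{\widetilde{K}}>0$ on $S^{n-1}$.

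The next step is to extract the Euler-Lagrange equation. Given any $G$-invariant continuous $\varphi:S^{n-1}\to\R$, for $|t|$ small I would form the Wulff shape $K_t$ of $h_{\widetilde{K}}+t\varphi$ as in the setup preceding \eqref{dualAlexandrov}. Since $h_{\widetilde{K}}+t\varphi$ is $G$-invariant and positive, $K_t\in\kno$ is $G$-invariant. The scale-invariance \eqref{entropy-rescale-invariant} implies that $\widetilde{K}$ minimizes $\Phi$ over \emph{all} $G$-invariant bodies in $\kno$, not just those with $\widetilde{V}_q=1$, so $t\mapsto \Phi(K_t)$ is minimized at $t=0$. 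Combining the standard Wulff-shape derivative $\frac{d}{dt}\int h_{K_t}^p\,d\mu\big|_{t=0}=p\int h_{\widetilde{K}}^{p-1}\varphi\,d\mu$ with the variational formula \eqref{dualAlexandrov} and the normalization $\widetilde{V}_q(\widetilde{K},Q)=1$, the vanishing of the derivative at $t=0$ reads
\[
\int_{S^{n-1}}\frac{\varphi}{h_{\widetilde{K}}}\,d\widetilde{C}_q(\widetilde{K},Q;\cdot)
=\frac{1}{\int h_{\widetilde{K}}^p\,d\mu}\int_{S^{n-1}}h_{\widetilde{K}}^{p-1}\varphi\,d\mu.
\]

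Using \eqref{lpdualcurvmeasureQ}, the left side equals $\int \varphi\,h_{\widetilde{K}}^{p-1}\,d\widetilde{C}_{p,q}(\widetilde{K},Q;\cdot)$. Setting $\lambda=1/\int h_{\widetilde{K}}^p\,d\mu>0$, this says that the two finite Borel measures $h_{\widetilde{K}}^{p-1}\,d\widetilde{C}_{p,q}(\widetilde{K},Q;\cdot)$ and $\lambda h_{\widetilde{K}}^{p-1}\,d\mu$ pair to the same value against every $G$-invariant continuous test function. Both measures are $G$-invariant ($\mu$ by hypothesis, $h_{\widetilde{K}}^{p-1}$ because $\widetilde{K}$ is $G$-invariant, and $\widetilde{C}_{p,q}(\widetilde{K},Q;\cdot)$ because $\widetilde{K}$ and $Q$ are $G$-invariant, which is direct from \eqref{intgCqoin} and a change of variables using orthogonality of $g\in G$), so Lemma~\ref{Invariant-Measure} upgrades the identity to equality of measures. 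Dividing by the positive continuous factor $h_{\widetilde{K}}^{p-1}$ yields $\widetilde{C}_{p,q}(\widetilde{K},Q;\cdot)=\lambda\mu$.

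Finally I would absorb the Lagrange multiplier by rescaling. A short computation from \eqref{dualcurvmeasure} and \eqref{lpdualcurvmeasureQ} together with $\rho_{cK}=c\rho_K$, $h_{cK}=c\,h_K$ and the invariance $\pmb{\alpha}^*_{cK}=\pmb{\alpha}^*_K$ yields the homogeneity
\[
\widetilde{C}_{p,q}(cK,Q;\cdot)=c^{q-p}\,\widetilde{C}_{p,q}(K,Q;\cdot),\qquad c>0.
\]
Since $-q^*<p<0<q$ gives $q-p>0$, the body $K=\lambda^{-1/(q-p)}\widetilde{K}$ is $G$-invariant and satisfies $\widetilde{C}_{p,q}(K,Q;\cdot)=\mu$, as desired. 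The only mildly technical point in the variational step is the Wulff-shape derivative for $\int h_{K_t}^p\,d\mu$, which is standard. The main conceptual obstacle, namely the coercivity of $\Phi$ on $\mathcal{C}$ for $-q^*<p<0$ and the integrability exponent $s$, has already been resolved in Lemma~\ref{compactness} via the Blaschke-Santal\'o-type bound \eqref{qthIntrinsicBS-eq}.
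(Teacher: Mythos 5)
Your proposal is correct and follows essentially the same route as the paper: take the minimizer $\widetilde K$ from Lemma~\ref{compactness}, vary via a $G$-invariant Wulff shape, extract the Euler--Lagrange identity tested against $G$-invariant functions, upgrade to equality of measures via Lemma~\ref{Invariant-Measure}, and rescale by homogeneity of $\widetilde C_{p,q}$. The one place where you cut a corner is the claim that $\frac{d}{dt}\int h_{K_t}^p\,d\mu\big|_{t=0}=p\int h_{\widetilde K}^{p-1}\varphi\,d\mu$ is ``standard'': differentiating the support function of a Wulff shape along $t$ requires Aleksandrov's lemma and a dominated-convergence argument, and the paper deliberately avoids invoking it. Instead, using $p<0$ so that $h_{K_t}\leq h_{\widetilde K}+t\varphi$ gives $h_{K_t}^p\geq (h_{\widetilde K}+t\varphi)^p$, the paper introduces the auxiliary function $f(t)=\frac1p\log\int(h_{\widetilde K}+t\varphi)^p\,d\mu-\frac1q\log\widetilde V_q(K_t,Q)$, which is pointwise smooth in $t$, shows $f(t)\geq\Phi(K_t)\geq f(0)$ by scale invariance and minimality, and differentiates $f$ rather than $\Phi\circ K_t$. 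Your version is still valid, but to make it airtight you would either cite the Wulff-shape differentiability result explicitly or adopt the paper's one-sided comparison, which is cleaner precisely because it needs no regularity of $t\mapsto h_{K_t}$.
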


\begin{proof}
 We will show that the minimizer $\wt K$ of the functional $\Phi$ in Lemma \ref{compactness}
  corresponds
 to a solution to the $L^p$ dual Minkowski problem. In particular, we claim that
 for $\lambda=\int_{S^{n-1}}h_{\widetilde{K}}^p\,d\mu$, we have
\begin{equation}
\label{Step3-claim}
\mu=\lambda\, \widetilde{C}_q(\widetilde{K},Q;\cdot).
\end{equation}
According to Lemma~\ref{Invariant-Measure}, \eqref{Step3-claim} is equivalent to
\begin{equation}
\label{Step3-claim0}
\int_{S^{n-1}}\varphi\, h_{\widetilde{K}}^{p-1}\,d\mu=
\lambda\, \int_{S^{n-1}}\frac{\varphi}{h_{\widetilde{K}}}\,d\widetilde{C}_q(\widetilde{K},Q;\cdot)
\end{equation}
for any continuous $G$-invariant function $\varphi:\,S^{n-1}\to\R$.
For $t\geq 0$, we consider the Wulff shape
$$
K_t=\{x\in\R^n:\langle x,u\rangle\leq h_{\widetilde{K}}(u)+t\varphi(u)\;\forall u\in S^{n-1}\}.
$$
The Wulff shape is $G$-invariant, i.e. $K_t\in \mathcal C$, because for $g\in G$
\begin{align*}
gK_t &= \{gx \in \rn : \langle x, u\rangle \le h_{\wt K}(u) + t \varphi(u), u\in \sn \} \\
&=  \{y \in \rn : \langle g^{-1} y, u\rangle \le h_{\wt K}(u) + t \varphi(u), u\in \sn \} \\
&=  \{y \in \rn : \langle y, gu\rangle \le h_{\wt K}(gu) + t \varphi(gu), u\in \sn \} \\
&=  \{y \in \rn : \langle y, v\rangle \le h_{\wt K}(v) + t \varphi(v), v\in \sn \} =K_t.
\end{align*}

Note that $K_0=\widetilde{K}$, and so the variational formula \eqref{dualAlexandrov} yields that
\begin{equation}
\label{Step3-problem-K-vol}
\left.\frac{d }{dt}\,\widetilde{V}_q(K_t,Q)\right|_{t=0}=q\int_{S^{n-1}}\frac{\varphi}{h_{\widetilde{K}}}\,d \widetilde{C}_q(\widetilde{K},Q;\cdot).
\end{equation}
If $|t|$ is small, then we consider the differentiable function
$$
f(t)=\frac1p\log\int_{S^{n-1}}(h_{\widetilde{K}}+t \varphi)^p\,d\mu-\frac1q\log \widetilde{V}_q(K_t,Q).
$$
From the fact that
$\widetilde{V}_q(K_t,Q)^{\frac{-1}q}\cdot K_t\in\mathcal{C}$,
 $h_{K_t}\leq h_K(u)+t\varphi(u)$,
\eqref {entropy-rescale-invariant},
 and  \eqref{tildeCminimizes}, one obtains the following inequality

$$
f(t)\geq
\Phi(K_t)
=\Phi\left(\widetilde{V}_q(K_t,Q)^{\frac{-1}q}\cdot K_t\right)
\geq f(0).
$$
In particular, we have that $f$ has a minimum at $t=0$, and hence \eqref{Step3-problem-K-vol} with
$\widetilde{V}_q(K_0,Q)=1$ imply that
$$
0=f'(0)=\frac1{\lambda}\int_{S^{n-1}} \varphi\, h_K^{p-1}\,d\mu-\int_{S^{n-1}}\frac{\varphi}{h_{\widetilde{K}}}\,d \widetilde{C}_q(\widetilde{K},Q;\cdot),
$$
proving \eqref{Step3-claim0}, and in turn \eqref{Step3-claim}.\\

Finally, \eqref{Step3-claim} implies that
$\mu=\widetilde{C}_q(\widetilde{K},Q;\cdot)$ for $K=\lambda^{\frac1{q-p}}\widetilde{K}$, as desired.
\end{proof}

\end{document}